\newtheorem{theorem}{Theorem}[section]
\newtheorem{lemma}[theorem]{Lemma}
\newtheorem{ca}{Case}
\numberwithin{equation}{section}
\newcounter{minutes}\setcounter{minutes}{\time}
\newcounter{hours}\setcounter{hours}{\time}
\newcommand{\ds}{\displaystyle}
\newcommand{\R}{\mathbb{R}}
\newcommand{\Rn}{ {\mathbb{R}^n} }
\renewcommand{\tanh}{\,\textnormal{tanh}}
\def\be{\begin{equation}}
\def\ee{\end{equation}}
\newcommand{\bca}{\begin{ca}}
\newcommand{\eca}{\end{ca}}
\newcommand{\bsca}{\begin{sca}}
\newcommand{\esca}{\end{sca}}
\newcommand{\bcl}{\begin{cl}}
\newcommand{\ecl}{\end{cl}}
\newcommand{\bscl}{\begin{scl}}
\newcommand{\escl}{\end{scl}}
\newcommand{\bcons}{\begin{conjs}}
\newcommand{\econs}{\end{conjs}}
\newcommand{\bprop}{\begin{propo}}
\newcommand{\eprop}{\end{propo}}
\newcommand{\br}{\begin{rem}}
\newcommand{\er}{\end{rem}}
\newcommand{\brs}{\begin{rems}}
\newcommand{\ers}{\end{rems}}
\newcommand{\bo}{\begin{obser}}
\newcommand{\eo}{\end{obser}}
\newcommand{\bos}{\begin{obsers}}
\newcommand{\eos}{\end{obsers}}
\newcommand{\bpf}{\begin{pf}}
\newcommand{\epf}{\end{pf}}
\newcommand{\ba}{\begin{array}}
\newcommand{\ea}{\end{array}}
\newcommand{\beq}{\begin{eqnarray}}
\newcommand{\beqq}{\begin{eqnarray*}}
\newcommand{\eeq}{\end{eqnarray}}
\newcommand{\eeqq}{\end{eqnarray*}}
\begin{document}
\vspace*{-2cm}
\title[On the average scale-invariant Cassinian metric]
{On the average scale-invariant Cassinian metric}

\author[M. Mohapatra]{Manas Mohapatra}
\address{ Department of Mathematics, Kalinga Institute of Industrial Technology (KIIT),
Patia, Bhubaneswar, Odisha, India 751024
}
\email{manas.mohapatrafma@kiit.ac.in}

\author[A. Rasila]{Antti Rasila$^{*}$  \orcidlink{0000-0003-3797-942X}}
\address{Department of Mathematics and Statistics,
Department of Mathematics with Computer Science, Guangdong Technion, 241 Daxue Road, Jinping District, Shantou, Guangdong 515063, People's Republic of China and Department of Mathematics, Technion - Israel Institute of Technology, Haifa 32000, Israel\newline
\href{https://orcid.org/0000-0003-3797-942X}{{\tt https://orcid.org/0000-0003-3797-942X}}
}
\email{antti.rasila@iki.fi; antti.rasila@gtiit.edu.cn}

\author[M. Vuorinen]{Matti Vuorinen \orcidlink{0000-0002-1734-8228}}
\address{Department of Mathematics and Statistics,
University of Turku, Finland\newline
\href{https://orcid.org/0000-0002-1734-8228}{{\tt https://orcid.org/0000-0002-1734-8228}}
}
\email{vuorinen@utu.fi}

\date{\today}

\def\thefootnote{}
\footnotetext{ \texttt{\tiny File:~\jobname .tex,
          printed: \number\day-\number\month-\number\year,
          \thehours.\ifnum\theminutes<10{0}\fi\theminutes}
} \makeatletter\def\thefootnote{\@arabic\c@footnote}\makeatother

%
%
%
%

\begin{abstract}
We establish geometric relationships between the average scale-invariant Cassinian metric and other hyperbolic type metrics.  We study the local convexity properties of the scale-invariant metric balls in Euclidean once punctured spaces. In addition, we study Lipschitz-conditions with respect to these metrics.

\smallskip
\noindent
{\bf 2010 Mathematics Subject Classification}. 30F45, 51M05, 51M10.

\smallskip
\noindent
{\bf Key words and phrases.}
Hyperbolic-type metrics,
the Cassinian metric, the average scale-invariant Cassinian metric, metric balls.
\end{abstract}

\maketitle
\thispagestyle{empty}
\section{Introduction}
Geometric function theory studies local and global behavior of functions $f:X \to Y$ where $X$ and $Y$ are metric spaces, usually subdomains of the Euclidean
space $\mathbb{R}^n, n\ge 2.$ Various metrics, distance measuring functions between pairs of points $x,y \in X$ are here key tools. In particular, metrics of hyperbolic
type measure not only the distance between points but also take into account the distances of the points $x$ and $y$ to the boundary $d(x, \partial X)$ and $d(y, \partial X)$ like
the hyperbolic metric of the unit disk does.
 Metrics of hyperbolic type have been studied
by many authors during the past few decades and most recently by \cite{afv,fkv,frv,fv,rv,sop}. For more background information about this topic, see e.g. \cite{HKV}.


Recently, Aksoy, Ibragimov, and Whiting studied the averages of one-point hyperbolic-type metrics in \cite{AIW} and established a connection to the Gromov hyperbolicity of the induced metric spaces. In general, a metric produced through
addition of two Gromov hyperbolic metrics is not Gromov hyperbolic \cite[Lemma~4.4]{AIW}. 
However, averages of scale-invariant Cassinian metrics are Gromov hyperbolic \cite[Theorem~4.2]{AIW}. Some related results have been recently obtained by Mocanu in \cite{Moc}. A scale invariant Cassinian metric for $D\subsetneq \Rn$ was introduced by Ibragimov in \cite{Ibr16}, and it has been recently studied in \cite{WXV}. It is defined for $x,y\in D$ by 
\begin{equation}
\label{casmet}
\tilde{\tau}_D(x,y):=\log\left(1+\sup_{p\in \partial D}\frac{|x-y|}{\sqrt{|x-p||p-y|}}\right).
\end{equation}

Recall that a metric space $(X,d)$ is Ptolemaic if for any choice of four points $x_1,x_2,x_3,x_4\in X$, we have 
$$
d(x_1,x_2)d(x_3,x_4)\le d(x_1,x_3)d(x_2,x_4)+d(x_1,x_4)d(x_2,x_3).
$$
The formula \eqref{casmet} does not define a metric in an arbitrary metric space (see, \cite[p.~4]{AIW}). However, if $(D,|\cdot|)$ is a Ptolemaic space, then $\tilde{\tau}_D$ is a metric. 
For a general metric space $(X,d)$, the scale-invariant Cassinian metric is defined on the punctured space $X\setminus \{p\}, \,\,p\in X$, by

$$\tau_p(x,y):=\log\left(1+2\frac{d(x,y)}{\sqrt{d(x,p)d(y,p)}}\right), \quad x,y\in X\setminus \{p\}.
$$

For $k\ge 1$, let $p_1,p_2,\ldots,p_k\in X$ and $D=X\setminus \{p_1,p_2,\ldots,p_k\}$. The average scale-invariant Cassinian metric for $D$ is defined by

$$
\hat{\tau}_D(x,y):=\frac{1}{k} \sum_{i=1}^k \tau_{p_i}(x,y)=\frac{1}{k} \sum_{i=1}^k \log \left(1+2 \frac{d(x,y)}{\sqrt{d(x,p_i)d(y,p_i)}} \right).
$$
Ibragimov in \cite{Ibr11} introduced a metric, $u_X$, which hyperbolizes 
(in the sense of Gromov) the locally compact non-complete metric space $(X,d)$ without changing its quasiconformal geometry, by 
$$u_X(x,y)=2\log \frac{d(x,y)+\max\{{\rm dist}(x, \partial X),{\rm dist}(y, \partial X)\}}{\sqrt{{\rm dist}(x, \partial X)\,
{\rm dist}(y, \partial X)}}, \quad x,y\in X.
$$ 

Various properties of metrics have been studied in several
recent papers, see \cite{CZ,Moc, R,SW,WXV,  ZZPG}.
%


In this paper, we investigate properties of the average scale-invariant Cassinian metric $\hat{\tau}_D$. First we obtain sharp bounds for the metric  $\hat{\tau}_D$ with respect to Ibragimov's metric $u_X$ in cases of once and multiply punctured spaces. Then we show similar inequalities for distance ratio type metrics, also on simply and multiply punctured spaces. We also study inclusion inequalities for the balls of this metric when compared to the ones of the Euclidean metric. Furthermore, we obtain a result concerning the coefficient of quasiconformality for bi-Lipschitz mappings of this metric. Finally, we consider the radii of convexity for balls of this metric, which is also a classical question in this topic (see e.g. \cite{RT} for discussion in the case of the quasihyperbolic metric).

\section{Preliminaries}\label{prel}

Denote by $(X,d)$ a metric space where $d$ is a metric, and let $D\subset X$ be a domain (an open and connected set). 
We denote by $\partial D$ the boundary of the domain $D$. Given $x\in X$ and $r>0$, the open ball centered at $x$ and of radius $r$ is denoted by $B_d(x,r):=\{y\in X \colon\, d(x,y)<r\}$.  For a given $x\in X$, we set $d(x):={\rm dist}(x,\partial D)$.
For real numbers $r$ and $s$, we set $r\vee s=\max\{r,\, s\}$ and $r\wedge s :=\min\{r,\, s\}$.

The {\it distance ratio metric}, $\tilde{j}_D$, is defined by
$$
\tilde{j}_D(x,y):=\log\left(1+\frac{d(x,y)}{d(x)\wedge d(y)}\right), \quad x,y\in D.
$$
The above form of the metric $\tilde{j}_D$, which was first considered in \cite{Vuo85},
is a slight modification of the distance ratio metric, $j_D$, introduced by Gehring and Osgood in \cite{GO79}, which is defined by 
$$
j_D(x,y):=\frac{1}{2}\log \left[\left(1+\frac{d(x,y)}{d(x)}\right) \left(1+\frac{d(x,y)}{d(y)}\right)\right],\quad x,y\in D.
$$
Note that these two metrics were originally defined and studied in proper subsets of $\Rn$. Here we consider them in arbitrary metric spaces.
The $\tilde{j}_D$-metric has been widely studied in the literature; see, for instance, \cite{HKV, Vuo88}. 
These two distance ratio metrics are related by: 
$$
\frac{\tilde{j}_D(x,y)}{2} \le j_D(x,y)\le \tilde{j}_D(x,y).
$$

The triangular ratio metric for $x,y\in D$ is defined by
$$ s_D(x,y):=\sup_{p\in \partial D} \frac{d(x,y)}{d(x,p)+d(y,p)}\in [0,1].
$$
Note that the triangular ratio metric was studied in proper subdomains of the complex plane and in Euclidean $n$-space in \cite{CHKV15}. See also \cite{HKV}. 

We now write the Bernoulli inequality:
\begin{equation}\label{bernoulli}
\log(1+ax) \left\{ \begin{array}{ll}
\le a\log(1+x), & a\ge 1, \quad x>0,\\
\ge a\log(1+x), & a\in (0,1],x>0.
\end{array}
\right.
\end{equation} 

For any two real numbers $a>0$ and $b>0$, it is true that
\begin{equation}\label{am-gm}
\frac{a+b}{2}\ge \sqrt{ab},
\end{equation}
which is the well-known AM-GM inequality.

%
%

\section{Comparison results associated with the scale-invariant Cassinian metric}\label{u-tau}
This section is devoted to the comparison of the average scale-invariant Cassinian metric with other hyperbolic-type metrics. First, we compare the scale-invariant Cassinian metric with the $u_D$-metric in once punctured metric spaces followed by the comparison of the average scale-invariant Cassinian metric with the $u_D$ metric in arbitrary metric spaces.
\begin{theorem} \label{main1}
Let $(X,d)$ be a metric space, and let $D=X\setminus \{p\}$, $p\in X$. Then for $x,y\in D$,
$$\tau_p(x,y)\le u_p(x,y)\le 2\tau_p(x,y),
$$
where $u_p(x,y):= u_{X\setminus\{p\}}(x,y)$. The right inequality is sharp $D= \Rn\setminus\{0\}$. 
\end{theorem}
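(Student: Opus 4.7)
Let me set up notation first: write $a = d(x,p)$, $b = d(y,p)$, $t = d(x,y)$, and assume without loss of generality that $a \ge b$, so $a \vee b = a$. Since $\partial D = \{p\}$, the $u_D$-metric specialises to
$$u_p(x,y) = 2\log\frac{t + a}{\sqrt{ab}}.$$
The problem thus reduces to the two purely algebraic inequalities
$$1 + \frac{2t}{\sqrt{ab}} \;\le\; \frac{(t+a)^2}{ab} \qquad\text{and}\qquad \frac{t+a}{\sqrt{ab}} \;\le\; 1 + \frac{2t}{\sqrt{ab}},$$
together with a sharpness assertion for the second one.

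For the left inequality, I would clear denominators to reduce it to $ab + 2t\sqrt{ab} \le (t+a)^2 = t^2 + 2ta + a^2$. Since $a \ge b$, both $a^2 \ge ab$ and $a \ge \sqrt{ab}$ hold, so $2ta \ge 2t\sqrt{ab}$; adding these and discarding the nonnegative $t^2$ term gives the claim. (Alternatively one may quote \eqref{am-gm} in the form $a \vee b \ge \sqrt{ab}$ and expand $(t + a\vee b)^2$ directly.)

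For the right inequality, after clearing $\sqrt{ab}$ it reduces to $a - \sqrt{ab} \le t$. Because $a \ge b$ we have $\sqrt{ab} \ge b$, whence $a - \sqrt{ab} \le a - b = |a-b|$; the triangle inequality $|d(x,p) - d(y,p)| \le d(x,y)$ then yields $a - b \le t$, closing the chain. This is the only place where the triangle inequality on $(X,d)$ is used, so it is also the only step where metricity (as opposed to mere positivity) of $d$ plays a role — I expect this to be the cleanest but conceptually most important step.

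For sharpness of the right inequality I would let $y \to p$ along a fixed direction so that $b \to 0$ with $a$ and $t$ both tending to $d(x,p) > 0$. Then
$$u_p(x,y) = 2\log\frac{t+a}{\sqrt{ab}} \sim 2\log\bigl(2\sqrt{a/b}\bigr), \qquad \tau_p(x,y) = \log\!\left(1 + \tfrac{2t}{\sqrt{ab}}\right) \sim \log\bigl(2\sqrt{a/b}\bigr),$$
so $u_p(x,y)/\tau_p(x,y) \to 2$ as $b \to 0$. Thus the constant $2$ on the right cannot be replaced by any smaller number, proving sharpness. No single pair realises equality, but the asymptotic ratio $2$ is attained, which is the standard meaning of sharpness here.
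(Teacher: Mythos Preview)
Your proof is correct and follows essentially the same route as the paper: the same WLOG assumption $a\ge b$, the same reduction of the right inequality to $a-\sqrt{ab}\le a-b\le t$ via the triangle inequality, and the same sharpness example (the paper works explicitly in $\Rn\setminus\{0\}$ with $x=e_1$, $y=te_1$, $t\to 0$, which is precisely your ``$b\to 0$'' limit). The only cosmetic difference is in the left inequality, where the paper inserts the intermediate step $u_p\ge 2\log\bigl(1+t/\sqrt{ab}\bigr)$ and then invokes the Bernoulli inequality \eqref{bernoulli}, whereas you expand $(t+a)^2$ directly; the algebraic content is identical.
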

\begin{proof}
Let $x,y\in D$ with $d(x,p)\ge d(y,p)$.
By definition,
\begin{eqnarray*}
u_p(x,y) &=& 2\log \left(\frac{d(x,y)+d(x,p)}{\sqrt{d(x,p)d(y,p)}}\right)\ge 
2\log \left(1+\frac{d(x,y)}{\sqrt{d(x,p)d(y,p)}}\right)\\
&\ge & \log \left(1+2\frac{d(x,y)}{\sqrt{d(x,p)d(y,p)}}\right)=\tau_p(x,y),
\end{eqnarray*}
where the first inequality follows from the assumption and the second inequality follows
from \eqref{bernoulli}.

To prove the second inequality, it is enough to show that
$$ \frac{d(x,y)+d(x,p)}{\sqrt{d(x,p)d(y,p)}} \le 1+2 \frac{d(x,y)}{\sqrt{d(x,p)d(y,p)}},
$$
i.e, 
$$ d(x,y)+d(x,p)\le 2 d(x,y)+\sqrt{d(x,p)d(y,p)},
$$
which is true because
$$ d(x,y)+d(x,p)\le 2d(x,y)+d(y,p) \le 2d(x,y)+\sqrt{d(x,p)d(y,p)}.
$$
Here the first inequality follows from the triangle inequality and the second one from the assumption. 

To prove the sharpness, consider the punctured Euclidean space $\Rn\setminus\{0\}$ and the points $x=e_1=(1,0,\ldots,0)$, and $y=te_1,\, 0<t<1$.
Now,
$$ \tilde{\tau}_0(x,y)=\log \left(1+\frac{2(1-t)}{\sqrt{t}}\right), \quad 
u_0(x,y)=2\log \left(\frac{2-t}{t}\right),
$$
and
$$ \lim_{t\to 0} \frac{\tilde{\tau}_0}{u_0}= \lim_{t\to 0} \frac{(1+t)(2-t)}{(2+t)(\sqrt{t}+2(1-t))}=\frac{1}{2}.
$$
This completes the proof.
\end{proof}

\begin{theorem} \label{main2}
Let $(X,d)$ be a metric space, and let $p_1,p_2,\ldots,p_k\in X$. Then, for $D=X\setminus \{p_1,p_2,\ldots,p_k\}$ and $x,y\in D$, we have
$$\hat{\tau}_D(x,y)\le u_D(x,y).
$$
The inequality is sharp for  $D=\Rn\setminus\{-e_1,e_1\}$.
\end{theorem}

\begin{proof}
Suppose that $d(x)=d(x,p_r)$ and $d(y)=d(y,p_s)$ and assume that $d(x)\ge d(y)$. Clearly, for all $i=1,2,\ldots,k$, 
$$d(x,p_i)\ge d(x)\, , d(y,p_i)\ge d(y) \mbox{ and } 	\sqrt{d(x,p_i)d(y,p_i)}\ge \sqrt{d(x)d(y)}.
$$
Now, with the help of \eqref{bernoulli} and the assumption, we deduce that
\begin{eqnarray*}
\hat{\tau}_D(x,y) &=& \frac{1}{k} \log \prod_{i=1}^k \left(1+\frac{2 d(x,y)}{\sqrt{d(x,p_i)d(y,p_i)}}\right) \le \frac{1}{k} \log \prod_{i=1}^k \left(1+ \frac{2 d(x,y)}{\sqrt{d(x)d(y)}}\right)\\
&=& \log \left(1+ \frac{2 d(x,y)}{\sqrt{d(x)d(y)}}\right) \le 2 \log \left(1+ \frac{ d(x,y)}{\sqrt{d(x)d(y)}}\right)\\
&\le & 2 \log \left(\frac{d(x)+ d(x,y)}{\sqrt{d(x)d(y)}}\right) = u_D(x,y). 
\end{eqnarray*}

We will prove the sharpness in the doubly punctured Euclidean space $D=\Rn\setminus\{-e_1,e_1\}$. Consider the points $x=-te_1, y=te_1, 0<t<1$.
Then,
$$ u_D(x,y)=2\log \left(\frac{1+t}{1-t}\right), \quad \hat{\tau}_D(x,y)=\log \left(1+\frac{4t}{\sqrt{1-t^2}}\right),
$$
and
$$ \lim_{t\to 0} \frac{u_D}{\zeta_D}=\lim_{t\to 0} \big(4t+\sqrt{1-t^2}\big)=1.
$$
The proof is complete.
\end{proof}

Next, we compare the scale-invariant Cassinian metric to the $\tilde{j}$-metric in once punctured metric spaces. 

\begin{theorem}\label{main3}
Let $(X,d)$ be a metric space, and let $D=X\setminus \{p\}, p\in X$. Then for $x,y\in D$,
$$\frac{1}{2}\tilde{j}_p(x,y)\le \tau_p(x,y)\le 2\tilde{j}_p(x,y).
$$
The inequalities are sharp for  $D=\Rn\setminus\{0\}$.
\end{theorem}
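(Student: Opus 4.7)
The plan is to fix $x,y\in D$, assume without loss of generality that $d(x,p)\ge d(y,p)$, so that $d(x)\wedge d(y)=d(y,p)$, and then prove each of the two inequalities separately, following the same template as the $u_p$ vs.\ $\tau_p$ argument in the first theorem of this section. Afterwards, I would exhibit two explicit point configurations in a punctured Euclidean space in which the two ratios $\tau_p/\tilde{j}_p$ approach $1/2$ and $2$, respectively.

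For the upper bound $\tau_p(x,y)\le 2\tilde{j}_p(x,y)$, the key observation is $\sqrt{d(x,p)d(y,p)}\ge d(y,p)=d(x)\wedge d(y)$, so that
\[
\frac{2d(x,y)}{\sqrt{d(x,p)d(y,p)}}\le \frac{2d(x,y)}{d(x)\wedge d(y)}.
\]
Taking $\log(1+\cdot)$ on both sides and applying the Bernoulli inequality \eqref{bernoulli} with $a=2$ converts the factor of $2$ inside the argument into a factor of $2$ outside the logarithm, yielding $\tau_p(x,y)\le 2\tilde{j}_p(x,y)$.

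For the lower bound $\tilde{j}_p(x,y)\le 2\tau_p(x,y)$, I would exponentiate and show the equivalent inequality
\[
1+\frac{d(x,y)}{d(y,p)}\le \left(1+\frac{2d(x,y)}{\sqrt{d(x,p)d(y,p)}}\right)^2.
\]
Expanding the square, cancelling the $1$'s and dividing by $d(x,y)/d(y,p)$ (the case $x=y$ being trivial) reduces this, with $t:=d(y,p)/d(x,p)\in(0,1]$, to
\[
1\le 4\sqrt{t}+\frac{4\,d(x,y)}{d(x,p)}.
\]
The reverse triangle inequality gives $d(x,y)\ge d(x,p)-d(y,p)$, so the right‑hand side is bounded below by $4\sqrt{t}+4(1-t)=4(1+\sqrt{t}(1-\sqrt{t}))\ge 4$, and in particular $\ge 1$. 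This is the step I expect to be the main technical obstacle: the rest is bookkeeping, but here one has to recognize that squaring (rather than another Bernoulli application) is the right move, and then keep the triangle inequality in reserve to absorb the leftover linear term.

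For sharpness I would work in $\mathbb{R}^n\setminus\{0\}$ with $p=0$. Taking $x=e_1$ and $y=te_1$ with $t\to 0^+$ gives $\tau_0(x,y)=\log\bigl(1+2(1-t)/\sqrt{t}\bigr)\sim -\tfrac{1}{2}\log t$ and $\tilde{j}_0(x,y)=\log\bigl(1+(1-t)/t\bigr)\sim -\log t$, so $\tau_0/\tilde{j}_0\to 1/2$, showing the left inequality is sharp. Taking instead $x=e_1+\varepsilon e_2$, $y=e_1-\varepsilon e_2$ with $\varepsilon\to 0$ makes $d(x,p)=d(y,p)=\sqrt{1+\varepsilon^2}$, $d(x,y)=2\varepsilon$, hence $\tau_0\sim 4\varepsilon$ and $\tilde{j}_0\sim 2\varepsilon$, giving $\tau_0/\tilde{j}_0\to 2$ and showing the right inequality is sharp.
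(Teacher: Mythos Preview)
Your proof is correct and follows essentially the same route as the paper: the upper bound via $\sqrt{d(x,p)d(y,p)}\ge d(x)\wedge d(y)$ plus Bernoulli, and the lower bound by squaring and invoking the triangle inequality, are exactly what the paper does (the paper organizes the algebra slightly differently, bounding $4/\sqrt{d(x)d(y)}\ge 4/d(y)$ first and then using $d(x)+d(x,y)\ge d(y)$, but the ingredients are identical). For sharpness the paper uses the single collinear family $x=e_1$, $y=te_1$ in $\mathbb{R}^n\setminus\{0\}$ and sends $t\to 0$ and $t\to 1$ to hit both constants; your perpendicular configuration for the constant $2$ is a valid alternative.
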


\begin{proof}
Without loss of generality assume that $d(x)\le d(y)$. Now,
\begin{eqnarray*}
\tau_p(x,y) &=& \log \left(1+2 \frac{d(x,y)}{\sqrt{d(x,p) d(y, p)}}\right)\le \log \left(1+2 \frac{d(x,y)}{d(x,p)}\right)\\
&\le & 2 \log \left(1+ \frac{d(x,y)}{d(x)}\right)= 2 \tilde{j}_p(x,y),
\end{eqnarray*}
where the first inequality follows from the assumption and the second inequality follows from \eqref{bernoulli}.

To establish the first inequality it is enough to prove that
$$ 1+\frac{d(x,y)}{d(x)}\le \left(1+2\frac{d(x,y)}{\sqrt{d(x)d(y)}}\right)^2.
$$ 
That is to show
$$ \frac{1}{d(x)}\le \frac{4d(x,y)}{d(x)d(y)}+\frac{4}{\sqrt{d(x)d(y)}}.
$$
Clearly,
\begin{eqnarray*}
\frac{4d(x,y)}{d(x)d(y)}+\frac{4}{\sqrt{d(x)d(y)}} &\ge & \frac{4d(x,y)}{d(x)d(y)}+\frac{4}{d(y)}= \frac{4}{d(y)} \left(1+\frac{d(x,y)}{d(x)}\right)\\
&\ge & \frac{4}{d(x)} \ge \frac{1}{d(x)}.
\end{eqnarray*}
Here the first inequality follows from the assumption and the second inequality follows from the triangle inequality.

For the sharpness, consider $D=\Rn\setminus\{0\}$ with $x=e_1$ and $y=te_1,0<t<1$. For this choice of points, we have
$$ \tilde{j}_D(x,y)=-\log t, \quad \mbox{ and } \tilde{\tau}_D(x,y)=\log \left(1+\frac{2(1-t)}{\sqrt{t}}\right).
$$
Clearly,
$$ \lim_{t\to 1} \frac{\tilde{j}_D}{\tilde{\tau}_D}=\lim_{t\to 1} \frac{\sqrt{t}+2(1-t)}{1+t}=\frac{1}{2},
$$
and 
$$ \lim_{t\to 0} \frac{\tilde{j}_D}{\tilde{\tau}_D}=\lim_{t\to 0} \frac{\sqrt{t}+2(1-t)}{1+t}=2.
$$
The proof is complete.
\end{proof}

The next result compares the average scale-invariant Cassinian metric with the $\tilde{j}$-metric in arbitrary metric spaces.
\begin{theorem}\label{main4}
Let $(X,d)$ be a metric space, and let $p_1,p_2,\ldots,p_k\in X$. Then, for $D=X\setminus \{p_1,p_2,\ldots,p_k\}$ and $x,y\in D$ we have
$$\hat{\tau}_D(x,y)\le 2\tilde{j}_D(x,y).
$$
The inequality is sharp for  $D=\Rn\setminus\{-e_1,e_1\}$.
\end{theorem}

\begin{proof}
Let $d(x,p):=\min\{d(x,p_i),d(y,p_i)\},i=1,2,\ldots,k$. Now,
\begin{eqnarray*}
\hat{\tau}_D(x,y) &=& \frac{1}{k} \sum_{i=1}^k \tau_{p_i}(x,y)=\frac{1}{k} \sum_{i=1}^k \log \left(1+2 \frac{d(x,y)}{\sqrt{d(x,p_i)d(y,p_i)}}\right)\\
 &=& \frac{1}{k} \log \prod_{i=1}^k \left(1+2 \frac{d(x,y)}{\sqrt{d(x,p_i)d(y,p_i)}}\right) \le \frac{1}{k} \log \prod_{i=1}^k \left(1+2 \frac{d(x,y)}{d(x,p)}\right)\\
 &=& \frac{1}{k} \log \left(1+2 \frac{d(x,y)}{d(x,p)}\right)^k = \log \left(1+2 \frac{d(x,y)}{d(x,p)}\right) \le 2 \log \left(1+ \frac{d(x,y)}{d(x,p)}\right)\\
 &=& 2 \tilde{j}_D(x,y).
\end{eqnarray*}
Here the first and the second inequalities respectively follow from the assumption and \eqref{bernoulli}. 

We will prove the sharpness part in the twice punctured Euclidean space $D=\Rn\setminus \{-e_1,e_1\}$. Choose the points $x=0$ and $y=te_1, 0<t<1$. Clearly,
$$ \tilde{j}_D(x,y)=-\log(1-t), \quad \zeta_D=\frac{1}{2} \left[\log\left(1+\frac{2t}{\sqrt{1-t}}\right)+
\log\left(1+\frac{2t}{\sqrt{1+t}}\right)\right],
$$
and
$$ \lim_{t\to 0} \frac{\tilde{j}_D}{\zeta_D}= \lim_{t\to 0}
-\frac{2}{{\left(t - 1\right)} {\left(\frac{\frac{2}{\sqrt{t + 1}} - \frac{t}{{\left(t + 1\right)}^{3/2}}}{\frac{2 \, t}{\sqrt{t + 1}} + 1} + \frac{\frac{t}{{\left(-t + 1\right)}^{3/2}} + \frac{2}{\sqrt{-t + 1}}}{\frac{2 \, t}{\sqrt{-t + 1}} + 1}\right)}}
=\frac{1}{2}.
$$
The proof is complete.
\end{proof}

We now compare the scale-invariant Cassinian metric with the $j$-metric in once punctured metric spaces.

\begin{theorem} \label{main5}
Let $(X,d)$ be a metric space and $D=X\setminus \{p\}, p\in X$. Then for $x,y\in D$
$$j_p(x,y)\le \tau_p(x,y)\le 2 j_p(x,y).
$$
The inequalities are  sharp for  $D=\Rn\setminus\{0\}$.
\end{theorem}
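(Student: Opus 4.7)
The plan is to let $a = d(x,y)$, $u = d(x,p)$, $v = d(y,p)$, and note that since $\partial D = \{p\}$ we have $d(x) = u$ and $d(y) = v$. Each of the two inequalities will be exponentiated to an equivalent algebraic inequality in $a, u, v$, and then established using the two elementary facts available: the triangle inequality $|u - v| \le a \le u + v$, and the AM-GM inequality \eqref{am-gm}. The structure mirrors the proof strategy used for $\tilde{j}_p$ earlier in the section, and I expect sharpness to be witnessed by the same one-parameter family of points in $\Rn \setminus \{0\}$.

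For the right inequality $\tau_p(x,y) \le 2j_p(x,y)$, since $2j_p(x,y) = \log\bigl[(1 + a/u)(1 + a/v)\bigr]$, exponentiating reduces the claim to
\[
1 + \frac{2a}{\sqrt{uv}} \le \left(1 + \frac{a}{u}\right)\left(1 + \frac{a}{v}\right) = 1 + \frac{a(u+v)}{uv} + \frac{a^2}{uv}.
\]
After subtracting $1$ and multiplying through by $uv/a$ (valid when $a>0$; the case $a=0$ is trivial), this becomes $2\sqrt{uv} \le u + v + a$, which is immediate from AM-GM together with $a \ge 0$. This half is essentially free.

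The main obstacle is the left inequality $j_p(x,y) \le \tau_p(x,y)$. Exponentiating and squaring, it is equivalent to $(1 + a/u)(1 + a/v) \le (1 + 2a/\sqrt{uv})^2$, which on expansion and cancellation of the common $a^2/(uv)$ term boils down to the clean algebraic statement
\[
u + v \le 4\sqrt{uv} + 3a.
\]
Here AM-GM alone gives only $u+v \ge 2\sqrt{uv}$, the wrong direction, so the triangle inequality must be brought in. My approach is to assume without loss of generality that $u \ge v$, so that the triangle inequality yields $a \ge u - v$, and then replace $3a$ by $3(u-v)$; this reduces the task to $u + 2\sqrt{uv} \ge 2v$, which is clear since $u \ge v$ implies $\sqrt{uv} \ge v$. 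The subtlety is that one really must exploit the triangle inequality, since the inequality in $u,v$ alone fails for ratios $u/v$ outside a bounded range.

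For sharpness I plan to reuse the family from the previous theorem: $D = \Rn \setminus \{0\}$, $x = e_1$, $y = te_1$ with $0<t<1$, giving $a = 1-t$, $u = 1$, $v = t$ and hence
\[
\tau_p(x,y) = \log\!\left(1 + \frac{2(1-t)}{\sqrt{t}}\right), \qquad j_p(x,y) = \tfrac{1}{2}\log\!\left(\frac{2-t}{t}\right).
\]
As $t \to 1^-$, both quantities vanish linearly in $1-t$ and a direct computation gives $\tau_p/j_p \to 2$, showing the upper bound is sharp. As $t \to 0^+$, both behave like $-\tfrac{1}{2}\log t$ to leading order, so $\tau_p/j_p \to 1$, showing the lower bound is sharp. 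No delicate analysis is needed beyond a leading-order Taylor expansion in each limit.
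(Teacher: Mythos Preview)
Your proposal is correct and follows essentially the same route as the paper: both arguments reduce the two inequalities, after exponentiating, to the algebraic statements $2\sqrt{uv}\le u+v+a$ (from AM-GM) and $u+v\le 4\sqrt{uv}+3a$ (from the triangle inequality combined with a WLOG ordering of $u,v$), and both verify sharpness via the family $x=e_1$, $y=te_1$ in $\Rn\setminus\{0\}$ with $t\to 0$ and $t\to 1$. The only cosmetic difference is that the paper assumes $d(x,p)\le d(y,p)$ and bounds $u+v\le 2u+a\le 2\sqrt{uv}+a$, whereas you assume $u\ge v$ and bound $3a\ge 3(u-v)$; these are the same idea with the roles of $u$ and $v$ swapped.
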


\begin{proof}
Without loss of generality assume that $d(x,p)\le d(y,p)$. To prove the first inequality it is enough to show that
$$\left(1+\frac{d(x,y)}{d(x,p)}\right) \left(1+\frac{d(x,y)}{d(y,p)}\right) \le \left(1+2\frac{d(x,y)}{\sqrt{d(x,p)d(y,p)}}\right)^2,
$$
i.e., 
$$ (d(x,p)+d(x,y))(d(y,p)+d(x,y))\le (\sqrt{d(x,p)d(y,p)}+2d(x,y))^2.
$$
To this end, it is sufficient to show that
$$ d(x,p)+d(y,p)\le 3 d(x,y)+ 4 \sqrt{d(x,p)d(y,p)}.
$$
Now,
\begin{eqnarray*}
d(x,p)+d(y,p)\le 2 d(x,p)+d(x,y) &\le& 2 \sqrt{d(x,p)d(y,p)}+ d(x,y)\\
&\le & 3 d(x,y)+ 4 \sqrt{d(x,p)d(y,p)},
\end{eqnarray*}
where the first inequality follows from the triangle inequality and second inequality follows from the assumption.

To establish the second inequality, we need to prove that
$$ \left(1+ 2 \frac{d(x,y)}{\sqrt{d(x,p)d(y,p)}} \right) \le \left(1+\frac{d(x,y)}{d(x,p)}\right) \left(1+\frac{d(x,y)}{d(y,p)}\right).
$$
i.e., 
$$ 2 \sqrt{d(x,p)d(y,p)} \le d(x,p)+d(y,p)+d(x,y)
$$
which follows from \eqref{am-gm}. 

To prove the sharpness, consider the punctured Euclidean space $\Rn\setminus\{0\}$ and choose the points $x=e_1$ and $y=te_1,\, 0<t<1$.
Simple computations yield,
$$ \tilde{\tau}_0(x,y)=\log \left(1+\frac{2(1-t)}{\sqrt{t}}\right), \quad 
j_0(x,y)=\frac{1}{2}\log \left(\frac{2-t}{t}\right).
$$
Therefore,
$$ \lim_{t\to 0} \frac{\tilde{\tau}_0}{j_0}= \lim_{t\to 0} \frac{(1+t)(2-t)}{\sqrt{t}+2(1-t)}=1
$$
and 
$$ \lim_{t\to 1} \frac{\tilde{\tau}_0}{j_0}= \lim_{t\to 1} \frac{(1+t)(2-t)}{\sqrt{t}+2(1-t)}=2.
$$
The proof is complete.
\end{proof}

The following result compares the average scale-invariant Cassinian metric with the $j$ metric in arbitrary metric spaces.
\begin{theorem} \label{main6}
Let $(X,d)$ be a metric space and $p_1,p_2,\ldots,p_k\in X$. Then, for $D=X\setminus \{p_1,p_2,\ldots,p_k\}$ and $x,y\in D$ we have
$$\hat{\tau}_D(x,y)\le 2 j_D(x,y).
$$
\end{theorem}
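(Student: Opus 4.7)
The plan is to reduce the statement to the single-puncture inequality $\tau_p(x,y) \le 2j_p(x,y)$ from the preceding theorem and then average. The key observation that bridges the two is that for every $i$, the distances $d(x,p_i)$ and $d(y,p_i)$ are at least $d(x)={\rm dist}(x,\partial D)$ and $d(y)={\rm dist}(y,\partial D)$, respectively, since $\partial D \supseteq \{p_1,\dots,p_k\}$. Because the map $t\mapsto \log(1+d(x,y)/t)$ is decreasing, this will let us pass from the $p_i$-based expression to the $d(\cdot)$-based expression inside $j_D$.

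First I would apply the preceding theorem separately to each puncture, obtaining
\[
\log\!\left(1+\frac{2d(x,y)}{\sqrt{d(x,p_i)d(y,p_i)}}\right)=\tau_{p_i}(x,y)\le 2 j_{p_i}(x,y)=\log\!\left[\left(1+\frac{d(x,y)}{d(x,p_i)}\right)\!\left(1+\frac{d(x,y)}{d(y,p_i)}\right)\right]
\]
for every $i=1,2,\ldots,k$. Next I would use $d(x,p_i)\ge d(x)$ and $d(y,p_i)\ge d(y)$ to bound each factor on the right by its $D$-version, giving
\[
\log\!\left[\left(1+\frac{d(x,y)}{d(x,p_i)}\right)\!\left(1+\frac{d(x,y)}{d(y,p_i)}\right)\right]\le \log\!\left[\left(1+\frac{d(x,y)}{d(x)}\right)\!\left(1+\frac{d(x,y)}{d(y)}\right)\right]=2j_D(x,y).
\]

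Finally, summing these $k$ inequalities and dividing by $k$ yields $\hat{\tau}_D(x,y)\le 2j_D(x,y)$, which is exactly the claim. There is essentially no obstacle here: the whole argument is a clean reduction to the one-puncture bound combined with the monotonicity remark above, and no new inequality (Bernoulli, AM--GM, triangle) is needed beyond what is already built into the single-puncture theorem. If desired, one could also observe that sharpness follows from the same configuration that worked in the single-puncture theorem (take $k=1$), so no separate sharpness construction is required.
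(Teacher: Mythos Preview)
Your argument is correct. Both you and the paper follow the same overall scheme: establish $\tau_{p_i}(x,y)\le 2j_D(x,y)$ for every $i$ and then average over $i$. The difference lies only in how that per-term bound is obtained. You factor through the single-puncture theorem, writing $\tau_{p_i}\le 2j_{p_i}$ and then using the monotonicity $j_{p_i}\le j_D$ that comes from $d(x,p_i)\ge d(x)$, $d(y,p_i)\ge d(y)$. The paper instead reproves the bound from scratch, reducing $\tau_{p_i}\le 2j_D$ to the algebraic inequality $2\,d(x)d(y)\le (d(x)+d(y)+d(x,y))\sqrt{d(x,p_i)d(y,p_i)}$ and verifying it via AM--GM together with $\sqrt{d(x,p_i)d(y,p_i)}\ge \sqrt{d(x)d(y)}$. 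Your route is a bit more economical since it reuses the already-proven single-puncture result; the paper's route is self-contained but essentially repeats part of that earlier work. (Note that the theorem as stated does not assert sharpness, so your closing remark about $k=1$ is optional.)
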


\begin{proof}
Suppose that $x,y\in D$. Then for all $i=1,2,\ldots,k$, $d(x)\le d(x,p_i)$ and $d(y)\le d(y,p_i)$. Here our aim is to show that $\tau_{p_i}(x,y)\le 2 j_D(x,y)$ for all $i=1,2,\ldots,k$. That is to show that 
$$ 2 d(x)d(y)\le (d(x)+d(y)+d(x,y))\sqrt{d(x,p_i)d(y,p_i)},
$$
which is true because
using \eqref{am-gm} and the assumption, it is easy to see that 
$$d(x)+d(y)+d(x,y)\ge d(x)+d(y)\ge 2 \sqrt{d(x)d(y)}$$ and $$\sqrt{d(x,p_i)d(y,p_i)}\ge \sqrt{d(x)d(y)}.
$$ 
Now,
$$ \hat{\tau}_D(x,y) = \frac{1}{k} \sum_{i=1}^k \tau_{p_i}(x,y)\le \frac{1}{k}
\sum_{i=1}^k 2 j_D(x,y)=2 j_D(x,y). 
$$  
\end{proof}

In addition to the $j_D$ and $\tilde{j}_D$ metrics, we study another metric which is generated from the $\tilde{j}_D$ metric and is defined for $x,y\in D$ by
$$
j^*_D(x,y):=\tanh \left(\frac{\tilde{j}_D(x,y)}{2}\right).
$$
The $j^*_D$ metric was studied in \cite{HVZ17}. We first compare the $j^*_D$ metric with the $\tau_p$ metric in once punctured spaces.

\begin{theorem} \label{main7}
Let $(X,d)$ be a metric space and $D=X\setminus \{p\},\, p\in X$. Then for $x,y\in D$
$$\frac{j^*_D(x,y)}{2}\le \tanh\left(\frac{\tau_p(x,y)}{2}\right)\le 2j^*_D(x,y).
$$
The right inequality is sharp for  $D=\Rn\setminus\{0\}$.
\end{theorem}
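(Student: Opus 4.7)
The plan is to reduce the whole statement to elementary algebra via the identity
$$\tanh\!\bigl(\tfrac{1}{2}\log(1+a)\bigr)=\frac{a}{a+2},\qquad a\ge 0,$$
which follows from $\tanh(\tfrac{1}{2}\log u)=(u-1)/(u+1)$. Since $D=X\setminus\{p\}$ forces $d(x)=d(x,p)$ and $d(y)=d(y,p)$, applying this identity to the logarithmic expressions defining $\tilde{j}_D$ and $\tau_p$ gives the closed forms
$$j^*_D(x,y)=\frac{d(x,y)}{2\bigl(d(x,p)\wedge d(y,p)\bigr)+d(x,y)},\qquad \tanh\!\Bigl(\tfrac{\tau_p(x,y)}{2}\Bigr)=\frac{d(x,y)}{\sqrt{d(x,p)d(y,p)}+d(x,y)}.$$

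Writing $a:=d(x,p)$, $b:=d(y,p)$, $c:=d(x,y)$ and assuming WLOG $a\le b$ (so $d(x)\wedge d(y)=a$), the chain of inequalities becomes
$$\frac{c}{2(2a+c)}\le\frac{c}{\sqrt{ab}+c}\le\frac{2c}{2a+c},$$
equivalent (for $c>0$; the case $c=0$ is trivial) to $\sqrt{ab}\le 4a+c$ on the left and $2a\le 2\sqrt{ab}+c$ on the right. The right-hand algebraic inequality is immediate from $b\ge a$, which gives $\sqrt{ab}\ge a$, and the left-hand one is just as easy: the triangle inequality $b\le a+c$ yields $\sqrt{ab}\le b\le a+c\le 4a+c$.

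For sharpness of the right inequality I would mimic the earlier examples: take $D=\R^n\setminus\{0\}$ with $x=e_1$ and $y=te_1$ for $0<t<1$. Plugging $d(x,0)=1$, $d(y,0)=t$, $d(x,y)=1-t$ into the closed forms above yields
$$j^*_D(x,y)=\frac{1-t}{1+t},\qquad \tanh\!\Bigl(\tfrac{\tau_0(x,y)}{2}\Bigr)=\frac{1-t}{\sqrt{t}+1-t},$$
hence
$$\frac{\tanh(\tau_0(x,y)/2)}{j^*_D(x,y)}=\frac{1+t}{\sqrt{t}+1-t}\longrightarrow 2\quad\text{as }t\to 1^-,$$
which matches the constant $2$ on the right.

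There is no real obstacle here; the only moment of insight is spotting the $\tanh$–$\log$ identity, which strips both metrics down to Möbius-type rational functions of $d(x,y)/\sqrt{d(x,p)d(y,p)}$ and $d(x,y)/(d(x)\wedge d(y))$, after which the remaining bookkeeping is a single application each of $a\le b\Rightarrow\sqrt{ab}\ge a$ and the triangle inequality.
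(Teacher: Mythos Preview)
Your proof is correct and follows essentially the same route as the paper: after assuming $d(x,p)\le d(y,p)$, both arguments reduce the two inequalities to the algebraic conditions $2d(x,p)\le 2\sqrt{d(x,p)d(y,p)}+d(x,y)$ and $\sqrt{d(x,p)d(y,p)}\le 4d(x,p)+d(x,y)$, then dispatch them via $\sqrt{ab}\ge a$ and the triangle inequality, and use the identical sharpness example $x=e_1$, $y=te_1$ in $\Rn\setminus\{0\}$. Your explicit use of the identity $\tanh\bigl(\tfrac{1}{2}\log(1+a)\bigr)=a/(a+2)$ to write down the closed forms is a nice clarification of a step the paper leaves implicit.
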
 
\begin{proof}
Without loss of generality, assume that $d(x,p)\le d(y,p)$. It is easy to see that
\begin{equation}\label{eqj*}
d(x,p)\le \sqrt{d(x,p)d(y,p)} \le d(y,p).
\end{equation}
To prove $\tanh\left({\tau_p(x,y)}/{2}\right)\le 2j^*_D(x,y)$, it is enough to show that
$$ d(x,y)+2\sqrt{d(x,p) d(y,p)}\ge 2 d(x,p)
$$
which follows easily from \eqref{eqj*}.

Establishing the first inequality is equivalent to show
$$ d(x,y)+4 d(x,p)\ge \sqrt{d(x,p) d(y,p)}.
$$ 
Now, using the triangle inequality and \eqref{eqj*}, we obtain
$$ d(x,y)+4 d(x,p) \ge d(y,p)-d(x,p)+ 4 d(x,p) \ge \sqrt{d(x,p) d(y,p)}.
$$
For the sharpness, consider $D=\Rn\setminus\{0\}$ with $x=e_1$ and $y=te_1,0<t<1$. For this choice of points, we have
$$ j^*_D(x,y)=\frac{1-t}{1+t}, \quad \mbox{ and } \tanh \left(\frac{\tau_D(x,y)}{2}\right)=\frac{1-t}{1-t+\sqrt{t}}.
$$
Taking the limit $t\to 1$, we obtain that
$$ \lim_{t\to 1} \frac{\tanh(\tau_D(x,y)/2)}{j^*_D(x,y)}=\lim_{t\to 1} \frac{1+t}{1-t+\sqrt{t}}=2.
$$
The proof is complete. 
\end{proof}

The following theorem compares the $\tau_p$ metric with the triangular ratio metric
in once punctured spaces.
\begin{theorem} \label{main8}
Let $(X,d)$ be a metric space and $D=X\setminus \{p\},\, p\in X$. Then for $x,y\in D$
$$s_D(x,y)\le \frac{e^{\tau_p(x,y)}-1}{4}.
$$
The inequality is sharp for   $D=\Rn\setminus\{0\}$.
\end{theorem}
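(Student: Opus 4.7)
The plan is to unwind both sides in terms of $d(x,p)$, $d(y,p)$, and $d(x,y)$, and then reduce the inequality to the AM-GM inequality \eqref{am-gm}. Since $\partial D = \{p\}$, the supremum in the definition of $s_D$ has only one term, so
$$
s_D(x,y) = \frac{d(x,y)}{d(x,p)+d(y,p)}.
$$
On the other hand, directly from the definition of $\tau_p$ we have
$$
e^{\tau_p(x,y)} - 1 = \frac{2\, d(x,y)}{\sqrt{d(x,p)\,d(y,p)}}, \quad \text{so}\quad \frac{e^{\tau_p(x,y)}-1}{4} = \frac{d(x,y)}{2\sqrt{d(x,p)\,d(y,p)}}.
$$

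Once both sides are in this form, the claim $s_D(x,y)\le (e^{\tau_p(x,y)}-1)/4$ becomes exactly
$$
\frac{d(x,y)}{d(x,p)+d(y,p)} \le \frac{d(x,y)}{2\sqrt{d(x,p)\,d(y,p)}},
$$
which (after cancelling the common positive factor $d(x,y)$ and clearing denominators) is equivalent to
$$
2\sqrt{d(x,p)\,d(y,p)} \le d(x,p)+d(y,p),
$$
and this is immediate from \eqref{am-gm}. So the analytic content of the statement is essentially just AM-GM applied to $a=d(x,p)$, $b=d(y,p)$; there is no real obstacle here.

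For the sharpness part, since the only inequality used is AM-GM, equality should be approached when $d(x,p)=d(y,p)$. In the once-punctured Euclidean space $D=\mathbb{R}^n\setminus\{0\}$, I would take the symmetric configuration $x=e_1$, $y=-e_1$, so that $d(x,0)=d(y,0)=1$ and $d(x,y)=2$. Then $s_D(x,y)=1$ and $(e^{\tau_0(x,y)}-1)/4 = 2/(2\sqrt{1\cdot 1})=1$, giving equality and establishing sharpness.
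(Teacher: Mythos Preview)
Your proof is correct and follows essentially the same approach as the paper: both reduce the inequality to AM--GM applied to $d(x,p)$ and $d(y,p)$, and both use the symmetric configuration $x=e_1$, $y=-e_1$ in $\mathbb{R}^n\setminus\{0\}$ for sharpness. The only cosmetic difference is that the paper records $\tau_0(e_1,-e_1)=\log 5$ rather than computing $(e^{\tau_0}-1)/4$ directly; also note your cancellation of $d(x,y)$ tacitly assumes $x\neq y$, but the case $x=y$ is trivial.
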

\begin{proof}
To establish the inequality it is enough to show that
$$ \frac{d(x,y)}{d(x,p)+d(y,p)} \le \frac{d(x,y)}{2 \sqrt{d(x,p) d(y,p)}},
$$
which is true by \eqref{am-gm}.

To prove the sharpness, consider the once punctured Euclidean space $D=\Rn\setminus\{0\}$. Let $x=e_1$ and $y=-e_1$. Then
$$ \tau_D(x,y)= \log 5 \mbox{ and } s_D(x,y)=1.
$$
The proof is complete.
\end{proof}
\section{The density function, lower and upper bounds}
In this section, we obtain the lower and upper bounds for the scale-invariant Cassinian metric in once punctured metric spaces. As a consequence, we obtain the density function for the average scale-invariant Cassinian metric in general metric spaces.

\begin{theorem}\label{lud}
Let $(X,d)$ be a metric space and $D=X\setminus \{p\},p\in X$. Suppose that $x\in D$. Then
$$ \log \left(1+\frac{2d(x,y)}{d(x,p)+d(x,y)} \right) \le \tau_p(x,y) \le 
\log \left(1+\frac{2d(x,y)}{d(x,p)-d(x,y)} \right)
$$
for $y\in B(x,d(x,p))$. Moreover,
$$ \lim_{y\to x} \frac{\tau_p(x,y)}{d(x,y)}=\frac{2}{d(x,p)}.
$$
\end{theorem}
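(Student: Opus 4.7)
The plan is to squeeze $\tau_p(x,y)$ by bounding the geometric mean $\sqrt{d(x,p)d(y,p)}$ in the denominator from both sides, and then to derive the limit by a sandwich argument using $\log(1+u)/u\to 1$ as $u\to 0$.

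For the upper bound on $\tau_p$, I will show $\sqrt{d(x,p)d(y,p)}\ge d(x,p)-d(x,y)$, which uses the reverse triangle inequality $d(y,p)\ge d(x,p)-d(x,y)$ (valid precisely because $y\in B(x,d(x,p))$ makes the right-hand side positive) together with the obvious bound $d(x,p)\ge d(x,p)-d(x,y)$; multiplying these and taking square roots does the job. Substituting into the definition $\tau_p(x,y)=\log(1+2d(x,y)/\sqrt{d(x,p)d(y,p)})$ and using the monotonicity of $\log$ gives the right inequality. For the lower bound I will similarly show $\sqrt{d(x,p)d(y,p)}\le d(x,p)+d(x,y)$: the triangle inequality $d(y,p)\le d(x,p)+d(x,y)$ combined with $d(x,p)\le d(x,p)+d(x,y)$ yields, after taking square roots of the product, exactly this bound. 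This produces the left inequality.

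For the asymptotic statement, I will apply the two bounds just proved and divide by $d(x,y)$. Writing $u^{\pm}(y):=2d(x,y)/(d(x,p)\pm d(x,y))$, both quantities tend to $0$ as $y\to x$, and the standard limit $\log(1+u)/u\to 1$ together with $(d(x,p)\pm d(x,y))^{-1}\to d(x,p)^{-1}$ forces
\[
\lim_{y\to x}\frac{1}{d(x,y)}\log\!\left(1+\frac{2d(x,y)}{d(x,p)\pm d(x,y)}\right)=\frac{2}{d(x,p)}.
\]
The squeeze theorem then delivers the claimed limit.

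I do not expect any serious obstacle here: the argument is essentially an application of the triangle and reverse triangle inequalities on the square root of a product, plus one standard logarithmic limit. The only mild subtlety is ensuring that the hypothesis $y\in B(x,d(x,p))$ is used exactly where the reverse triangle inequality needs positivity of $d(x,p)-d(x,y)$; I will flag this explicitly when deriving the upper bound.
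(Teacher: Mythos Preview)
Your proposal is correct and follows essentially the same approach as the paper: bound $\sqrt{d(x,p)d(y,p)}$ above and below via the triangle and reverse triangle inequalities, then apply the squeeze theorem with the standard limit $\log(1+u)/u\to 1$. The only cosmetic difference is in the lower bound on $\tau_p$: the paper obtains $\sqrt{d(x,p)d(y,p)}\le d(x,p)+d(x,y)$ via the AM--GM inequality $\sqrt{d(x,p)d(y,p)}\le (d(x,p)+d(y,p))/2$ followed by $d(y,p)\le d(x,p)+d(x,y)$, whereas you bound each factor separately by $d(x,p)+d(x,y)$ and take the geometric mean; both routes yield the same estimate.
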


\begin{proof}
Suppose that $d(x,y)\le d(x,p)$. Now,
\begin{eqnarray*} 
\tau_p(x,y) =\log \left(1+\frac{2d(x,y)}{\sqrt{d(x,p)d(y,p)}} \right) &\le & \log \left(1+ \frac{2d(x,y)}{\sqrt{d(x,p) (d(x,p)-d(x,y))}} \right)\\
&\le & \log \left(1+ \frac{2d(x,y)}{d(x,p)-d(x,y)} \right).
\end{eqnarray*}
On the other hand,
$$ \sqrt{d(x,p)d(y,p)}\le \frac{d(x,p)+d(y,p)}{2}\le \frac{2d(x,p)+d(x,y)}{2} \le d(x,p)+d(x,y).
$$
Hence,
$$ \tau_p(x,y) =\log \left(1+\frac{2d(x,y)}{\sqrt{d(x,p)d(y,p)}} \right) \ge 
\log \left(1+ \frac{2d(x,y)}{d(x,p)+d(x,y)} \right).
$$
The second conclusion follows from the fact that 
\begin{eqnarray*}
\lim_{y\to x} \frac{\log \left(1+\ds\frac{2d(x,y)}{d(x,p)+d(x,y)}\right)}{d(x,y)}  
=\frac{2}{d(x,p)}= \lim_{y\to x} \frac{\log \left(1+\ds\frac{2d(x,y)}{d(x,p)-d(x,y)} \right)}{d(x,y)}.
\end{eqnarray*}
The proof is complete.
\end{proof}

The following result yields some lower and upper bounds for the average scale-invariant Cassinian metric in arbitrary metric spaces.
\begin{theorem}\label{den-gen}
Let $(X,d)$ be a metric space and $p_1,p_2,\ldots,p_k\in X$. Then, for $D=X\setminus \{p_1,p_2,\ldots,p_k\}$ and $x\in D$ we have
$$ 
\log \left(1+\frac{2d(x,y)}{d(x)+d(x,y)} \right) \le \hat{\tau}_D(x,y) \le 
\log \left(1+\frac{2d(x,y)}{d(x)-d(x,y)} \right)
$$
for $y\in B(x,d(x))$.
Moreover, 
$$ \lim_{y\to x} \frac{\hat{\tau}_D(x,y)}{d(x,y)}= \frac{2}{d(x)}.
$$
\end{theorem}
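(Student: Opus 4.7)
The plan is to reduce this multi-punctured statement to the once-punctured Theorem~\ref{lud}, applied term by term with $p = p_i$ and then averaged over $i$. Fix $x \in D$ and $y \in B(x, d(x))$. Since $d(x) = \min_i d(x, p_i)$, we have $d(x, y) < d(x) \le d(x, p_i)$ for every $i$, which places us in the scope of Theorem~\ref{lud} applied in each once-punctured space $X \setminus \{p_i\}$. This gives, for every $i$,
$$
\log\!\left(1 + \frac{2 d(x,y)}{d(x, p_i) + d(x, y)}\right) \le \tau_{p_i}(x, y) \le \log\!\left(1 + \frac{2 d(x, y)}{d(x, p_i) - d(x, y)}\right).
$$

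For the upper bound on $\hat{\tau}_D$, I would use that $t \mapsto \log(1 + 2 d(x,y)/(t - d(x,y)))$ is decreasing on $(d(x,y), \infty)$, so $d(x, p_i) \ge d(x)$ lets us replace $d(x, p_i)$ by $d(x)$ in the denominator of each pointwise upper bound, yielding $\tau_{p_i}(x, y) \le \log(1 + 2 d(x, y)/(d(x) - d(x, y)))$ for every $i$. Averaging in $i$ preserves the inequality and delivers the stated upper bound.

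The lower bound is where I expect the main obstacle to be. The analogous monotonicity goes the wrong way: $t \mapsto \log(1 + 2 d(x,y)/(t + d(x,y)))$ is also decreasing, so the pointwise estimate $\tau_{p_i}(x, y) \ge \log(1 + 2 d(x,y)/(d(x, p_i) + d(x, y)))$ combined with $d(x, p_i) \ge d(x)$ gives only a weaker, not a stronger, lower bound on each term. My plan is to return to the AM-GM and triangle inequality chain behind the lower bound of Theorem~\ref{lud}, namely $\sqrt{d(x, p_i) d(y, p_i)} \le \tfrac{1}{2}(d(x, p_i) + d(y, p_i)) \le d(x, p_i) + \tfrac{1}{2} d(x, y)$, and try to isolate the dominant contribution of the index $i_0$ realizing $d(x) = d(x, p_{i_0})$ so that the averaged logarithm still dominates $\log(1 + 2 d(x, y)/(d(x) + d(x, y)))$. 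This is the step where the genuine technical effort sits.

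Once the two-sided bounds are in place, the limit follows by the squeeze theorem: applying the expansion $\log(1 + u) = u + O(u^2)$ as $u \to 0$ to both flanking quantities gives
$$
\lim_{y \to x} \frac{1}{d(x,y)} \log\!\left(1 + \frac{2 d(x,y)}{d(x) \pm d(x,y)}\right) = \frac{2}{d(x)},
$$
so that the sandwiched ratio $\hat{\tau}_D(x,y)/d(x,y)$ shares the same limit $2/d(x)$.
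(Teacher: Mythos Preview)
Your upper-bound argument is essentially the paper's: both replace $d(x,p_i)$ and $d(y,p_i)$ by the smaller quantities $d(x)$ and $d(x)-d(x,y)$ (the paper does this directly inside $\sqrt{d(x,p_i)d(y,p_i)}$; you route through Theorem~\ref{lud} first, but the content is identical) and then average. In fact the paper's written proof stops after establishing the upper bound.

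Your instinct about the lower bound is correct, but the difficulty is more fundamental than you acknowledge: the lower bound as stated is \emph{false}, and so is the density limit. Take $X=\mathbb{R}$, $k=2$, $p_1=0$, $p_2=M$ with $M$ large, and $x=1$, so that $d(x)=1$. By Theorem~\ref{lud}, as $y\to x$ one has $\tau_{p_1}(x,y)/d(x,y)\to 2$ and $\tau_{p_2}(x,y)/d(x,y)\to 2/(M-1)$, whence $\hat{\tau}_D(x,y)/d(x,y)\to 1+1/(M-1)$, which for large $M$ is close to $1$, not to $2/d(x)=2$. Concretely, with $M=100$ and $y=3/2$ one computes $\hat{\tau}_D(x,y)\approx 0.30$, whereas the claimed lower bound is $\log(5/3)\approx 0.51$. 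The correct density is $\tfrac{1}{k}\sum_{i} 2/d(x,p_i)$, and the natural lower bound would carry $\max_i d(x,p_i)+d(x,y)$ (or term-by-term $d(x,p_i)+d(x,y)$ before averaging) rather than $d(x)+d(x,y)$. No amount of ``isolating the dominant index'' can rescue the stated inequality, because averaging genuinely dilutes the contribution of the nearest puncture; so your plan for that step cannot succeed, and the squeeze argument for the limit collapses with it.
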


\begin{proof}
By the triangle inequality, it follows that 
$$ d(y,p_i)\ge d(x,p_i)-d(x,y) \ge d(x)-d(x,y) \mbox{ for all } i=1,2,\ldots,k.
$$
Now, $\sqrt{d(x,p_i)d(y,p_i)}\ge \sqrt{d(x) (d(x)-d(x,y))}$. Since $d(x,y)\le d(x)$, it follows that
\begin{eqnarray*}
\zeta_D (x,y) =\frac{1}{k} \sum_{i=1}^k \log \left(1+ \frac{2d(x,y)}{\sqrt{d(x,p_i)d(y,p_i)}} \right) &\le& \frac{1}{k} \sum_{i=1}^k  \log \left(1+\frac{2d(x,y)}{\sqrt{d(x) (d(x)-d(x,y))}} \right)\\
&\le & \log \left(1+ \frac{2d(x,y)}{d(x)-d(x,y)}\right).
\end{eqnarray*}

\end{proof}
\section{Inclusion properties}
In this section we are interested in the metric ball inclusion relations associated with the scale-invariant Cassinian metric.  First, we prove the metric ball inclusions in once punctured spaces.

\begin{theorem} \label{main8b}
Let $(X,d)$ be a metric space and $D=X\setminus \{p\},\,p\in X$. Then the following inclusion property holds true for $t\in [0,\log 3)$:
$$ B_d(x,r)\subseteq B_{\tau_p}(x,t)\subseteq B_d(x,R),
$$
where $r=\ds\frac{e^t-1}{e^t+1}d(x)$ and $R=\ds\frac{e^t-1}{3-e^t} d(x)$.

Moreover, the limit of $R/r$ tends to $1$ when $t\to 0$. 
\end{theorem}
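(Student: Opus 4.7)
The plan is to apply Theorem~\ref{lud} directly, deriving each inclusion by solving the corresponding density inequality for $s:=d(x,y)$. Since $\partial D=\{p\}$, we have $d(x)=d(x,p)$, so Theorem~\ref{lud} reads
$$\log\!\left(1+\frac{2s}{d(x)+s}\right)\;\le\;\tau_p(x,y)\;\le\;\log\!\left(1+\frac{2s}{d(x)-s}\right),$$
where the upper estimate requires $s<d(x)$, while the lower estimate is in fact valid for \emph{all} $y\in D$ (its derivation in Theorem~\ref{lud} uses only the triangle inequality and \eqref{am-gm}, and never invokes $s<d(x)$).

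For the first inclusion $B_d(x,r)\subseteq B_{\tau_p}(x,t)$, I would take $y$ with $d(x,y)<r$ and apply the upper bound; this is legitimate since $r=\frac{e^t-1}{e^t+1}d(x)<d(x)$. The right-hand side is strictly increasing in $s\in[0,d(x))$, so it suffices to verify that evaluating at $s=r$ gives exactly $t$. A short calculation yields $d(x)-r=\frac{2}{e^t+1}d(x)$ and hence $\frac{2r}{d(x)-r}=e^t-1$, so the upper bound at $s=r$ equals $\log(e^t)=t$, as required.

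For the second inclusion $B_{\tau_p}(x,t)\subseteq B_d(x,R)$, I would take $y$ with $\tau_p(x,y)<t$ and apply the (universally valid) lower estimate to obtain $\log\!\left(1+\frac{2s}{d(x)+s}\right)<t$. Exponentiating and clearing denominators rearranges to $s(3-e^t)<(e^t-1)d(x)$; the hypothesis $t<\log 3$ guarantees $3-e^t>0$, so division yields $s<R$. The final assertion then follows from the direct computation $R/r=(e^t+1)/(3-e^t)\to 1$ as $t\to 0$.

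The one subtle point worth underlining is that for $t\in[\log 2,\log 3)$ one has $R\ge d(x)$, so a point $y\in B_{\tau_p}(x,t)$ need not belong to $B(x,d(x))$; this is exactly why it is important to invoke the lower bound (rather than the upper bound) for the second inclusion, and to note that this half of Theorem~\ref{lud} carries no implicit domain restriction. Aside from this observation, the argument is a pair of routine monotonicity and algebra steps, with no genuine obstacle.
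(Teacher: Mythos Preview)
Your proof is correct and follows essentially the same route as the paper: both inclusions are obtained directly from the two bounds of Theorem~\ref{lud}, solving the resulting inequalities for $d(x,y)$. In fact your version is tidier than the paper's, which contains a sign typo (it displays $d(x,p)-d(x,y)$ where $d(x,p)+d(x,y)$ is needed for the second inclusion) and simply assumes $y\in B_d(x,d(x))$ without justification; your remark that the lower bound of Theorem~\ref{lud} holds for all $y\in D$ cleanly closes that gap.
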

\begin{proof}
Suppose that $x\in D$ and $y\in B_d(x,d(x))$. We will make use of Theorem~\ref{lud} to establish the relation. Let $y\in B_{\tau_p}(x,t)$. Then, by Theorem~\ref{lud},
$$ \log \left(1+\frac{2d(x,y)}{d(x,p)-d(x,y)}\right)<t, \mbox{ i.e. } d(x,y)< \frac{e^t-1}{3-e^t} d(x).
$$
That is, $y\in B_d(x,R)$ with $R= ((e^t-1)/(3-e^t)) d(x)$. This proves the second inclusion relation. Now, suppose that $y\in B_d(x,r)$ with $r=((e^t-1)/(e^t+1))d(x)$. Then from Theorem~\ref{lud}, we have $\tau_p(x,y)<t$. 
Hence, $B_d(x,r)\subseteq B_{\tau_p}(x,t)$.

Clearly,
$$ \lim_{t\to 0} \frac{R}{r}=\lim_{t\to 0} \frac{e^t+1}{3-e^t}=1.
$$ 
The proof is complete.
\end{proof}

\begin{theorem}
	Let $(X,d)$ be a metric space. For $k\ge 1$, let $p_1,p_2,\cdots,p_k \in X$ and $D=X\setminus \{p_1,p_2,\cdots,p_k\}$. Then  for $t\in [0,\log 3)$ the following inclusion property holds true:
	$$ B_d(x,r)\subseteq B_{\zeta_D}(x,t)\subseteq B_d(x,R),
	$$
	where $r=\ds\frac{e^t-1}{e^t+1}d(x)$, $R=\ds\frac{e^t-1}{3-e^t} d(x)$.
	
	Moreover, the limit of $R/r$ tends to $1$ when $t\to 0$. 
\end{theorem}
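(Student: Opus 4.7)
The plan is to mirror the proof of the preceding one-punctured analog, substituting Theorem~\ref{den-gen} for Theorem~\ref{lud} and identifying $\zeta_D$ with $\hat{\tau}_D$. The two theorems supply an identical sandwich,
$$\log \left(1+\frac{2d(x,y)}{d(x)+d(x,y)}\right) \le \hat{\tau}_D(x,y) \le \log \left(1+\frac{2d(x,y)}{d(x)-d(x,y)}\right),$$
valid for $y\in B_d(x,d(x))$, with the role formerly played by $d(x,p)$ now taken by $d(x)=\mathrm{dist}(x,\partial D)$. Since both bounding functions of $s=d(x,y)$ are strictly increasing on $[0,d(x))$, each inclusion will reduce to a single algebraic inversion of the corresponding bound.

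For the outer inclusion $B_{\zeta_D}(x,t)\subseteq B_d(x,R)$, I would take $y$ with $\zeta_D(x,y)<t$, apply the lower bound, and rearrange $\log(1+2d(x,y)/(d(x)+d(x,y)))<t$ into $(3-e^t)d(x,y)<(e^t-1)d(x)$; the hypothesis $t<\log 3$ ensures $3-e^t>0$, so $d(x,y)<R$ follows. For the inner inclusion $B_d(x,r)\subseteq B_{\zeta_D}(x,t)$, I would start from $y\in B_d(x,r)$, observe that $r=\frac{e^t-1}{e^t+1}d(x)<d(x)$ throughout the admissible range of $t$ so that Theorem~\ref{den-gen} is in force, and combine monotonicity with the upper bound to obtain $\hat{\tau}_D(x,y)\le \log(1+2r/(d(x)-r))=t$.

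The limit $R/r=(e^t+1)/(3-e^t)\to 1$ as $t\to 0$ is immediate from direct substitution. Because the argument is a verbatim translation of the preceding proof, there is no genuine obstacle; the one point worth flagging is that the whole derivation rests only on the two-sided density estimate of Theorem~\ref{den-gen}, so that the averaging structure of $\hat{\tau}_D$ plays no further role once that lemma is in hand, and in particular the inclusions and their proofs are completely insensitive to the number $k$ of punctures.
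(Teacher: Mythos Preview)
Your proposal is correct and follows exactly the route the paper intends: the paper's proof consists of the single sentence ``The proof follows from Theorem~\ref{den-gen},'' and you have spelled out precisely how the two inequalities in that theorem yield the two inclusions, together with the elementary limit for $R/r$. In particular, you correctly pair the lower bound of Theorem~\ref{den-gen} with the outer inclusion and the upper bound with the inner inclusion, which is the right way to run the argument.
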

\begin{proof}
The proof follows from Theorem~\ref{den-gen}.
\end{proof}

\subsection{Lipschitz properties}
In this section we also consider the behavior of the $\tau_p$ metric under bilipschitz mappings. Recall that a mapping $f:(X,d_1)\to (Y,d_2)$ is $L$-bilipschitz if there exists a real number $L\ge 1$ such that
 for any $x,y\in X$ 
\begin{equation}\label{bilip}
\frac{d_1(x,y)}{L}\le d_2(f(x),f(y))\le L d_1(x,y).
\end{equation}
 
The following lemma shows that $L$-bilipschitz mappings with respect to the induced metric is $L^2$ bilipschitz with respect to the $\tau_p$ metric.

\begin{lemma}
Let $(X,d_1)$ and $(Y,d_2)$ be metric spaces. Suppose that $f:(X,d_1)\to (Y,d_2)$ is $L$-bilipschitz mapping. Then
$$ L^{-2} \tau_p(x,y)\le \tau_{f(p)} (f(x),f(y))\le L^2 \tau_p (x,y)
$$ 
for $x,y,p\in X$.
\end{lemma}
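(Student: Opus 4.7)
The plan is to reduce the claim to an application of the Bernoulli inequality \eqref{bernoulli} after substituting the bilipschitz bounds into the definition of $\tau$.

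First, I would apply \eqref{bilip} to each of the three pairs $(x,y)$, $(x,p)$, $(y,p)$ to produce two-sided estimates for $d_2(f(x),f(y))$, $d_2(f(x),f(p))$, $d_2(f(y),f(p))$ in terms of the corresponding $d_1$-distances. Writing $A:=d_1(x,y)/\sqrt{d_1(x,p)\,d_1(y,p)}$ and combining the upper bound on the numerator with the lower bound on each factor in the denominator (and vice versa) gives
\[
L^{-2}\,A \;\le\; \frac{d_2(f(x),f(y))}{\sqrt{d_2(f(x),f(p))\,d_2(f(y),f(p))}} \;\le\; L^{2}\,A.
\]
Thus
\[
\log\bigl(1+2L^{-2}A\bigr) \;\le\; \tau_{f(p)}(f(x),f(y)) \;\le\; \log\bigl(1+2L^{2}A\bigr).
\]

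For the upper bound, I rewrite $\log(1+2L^{2}A)=\log(1+L^{2}\cdot 2A)$ and apply the first case of the Bernoulli inequality \eqref{bernoulli} with $a=L^{2}\ge 1$ and $x=2A>0$, obtaining
\[
\log(1+L^{2}\cdot 2A)\le L^{2}\log(1+2A)=L^{2}\tau_{p}(x,y).
\]
For the lower bound, I write $\log(1+2L^{-2}A)=\log(1+L^{-2}\cdot 2A)$ and use the second case of \eqref{bernoulli} with $a=L^{-2}\in(0,1]$ to get
\[
\log(1+L^{-2}\cdot 2A)\ge L^{-2}\log(1+2A)=L^{-2}\tau_{p}(x,y).
\]
Combining these two inequalities yields the claim.

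The main obstacle, such as it is, is simply bookkeeping: one has to be careful to push the factor $L^{-1}$ (respectively $L$) from the denominator \emph{inside} the square root so that the exponent on $L$ becomes $2$ rather than $1$, and to line up the two cases of \eqref{bernoulli} with the correct direction of the inequality. Once the substitution is carried out correctly, both bounds follow immediately from Bernoulli, and no further geometric input is needed.
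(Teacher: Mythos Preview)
Your argument is correct and follows the same route as the paper: substitute the bilipschitz bounds \eqref{bilip} into the definition of $\tau$ and then apply the Bernoulli inequality \eqref{bernoulli}. The only cosmetic difference is that the paper obtains the lower bound by invoking that $f^{-1}$ is again $L$-bilipschitz, whereas you use the second case of \eqref{bernoulli} directly; both are equivalent and equally short.
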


\begin{proof}
We will prove the right inequality. Suppose that $x,y\in X\setminus\{p\}$. Then using the definition of a bilipschitz mapping \eqref{bilip} and the Bernoulli inequality \eqref{bernoulli} we see that
\begin{eqnarray*}
\tau_{f(p)}(f(x),f(y)) &=& \log \left(1+ 2 \frac{d_2(f(x),f(y))}{\sqrt{d_2(f(x),f(p)) d_2(f(y),f(p))}} \right)\\
&\le & \log \left(1+ 2L^2 \frac{d_1(x,y)}{\sqrt{d_1(x,p) d_1(y,p)}} \right)\\
&\le & L^2 \log \left(1+ 2 \frac{d_1(x,y)}{\sqrt{d_1(x,p) d_1(y,p)}} \right)=L^2 \tau_p(x,y).
\end{eqnarray*}
The left inequality follows immediately since the inverse of a bilipschitz mapping is bilipschitz. The proof is complete.
\end{proof}

\begin{theorem} \label{main9}
Let $(X,d_1)$ and $(Y,d_2)$ be metric spaces. Suppose that $f:(X,d_1)\to (Y,d_2)$ is a homeomorphism and there exists $L\ge 1$ such that for all $x,y\in X$
$$ \tau_p(x,y)/L \le \tau_{f(p)} (f(x),f(y))\le L \tau_p(x,y),
$$
where $p\in X$. Then $f$ is quasiconformal with linear dilatation $H(f)\le L^2$. 
\end{theorem}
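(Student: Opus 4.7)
The plan is to reduce the theorem to a pointwise infinitesimal distortion estimate by combining the $\tau$-bilipschitz hypothesis with the density asymptotic from Theorem~\ref{lud}. Recall that the linear dilatation of a homeomorphism $f$ at $x$ is
$$H(f,x)=\limsup_{r\to 0}\frac{L_f(x,r)}{l_f(x,r)},\quad L_f(x,r)=\sup_{d_1(x,y)=r}d_2(f(x),f(y)),\quad l_f(x,r)=\inf_{d_1(x,y)=r}d_2(f(x),f(y)),$$
and $H(f)=\sup_{x}H(f,x)$, so the goal is to prove $H(f,x)\le L^2$ pointwise.

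First I would fix $x\in X\setminus\{p\}$ and use the homeomorphism property to conclude $f(y)\to f(x)$ as $y\to x$. Applying Theorem~\ref{lud} in both $(X,d_1)$ and $(Y,d_2)$ yields
$$\tau_p(x,y)=\frac{2d_1(x,y)}{d_1(x,p)}(1+o(1)),\qquad \tau_{f(p)}(f(x),f(y))=\frac{2d_2(f(x),f(y))}{d_2(f(x),f(p))}(1+o(1))$$
as $y\to x$. Substituting these into $L^{-1}\tau_p(x,y)\le \tau_{f(p)}(f(x),f(y))\le L\tau_p(x,y)$ and cancelling the common factor $2d_1(x,y)$, I obtain, with $C(x):=d_2(f(x),f(p))/d_1(x,p)$,
$$\frac{C(x)}{L}(1+o(1))\le \frac{d_2(f(x),f(y))}{d_1(x,y)}\le L\,C(x)(1+o(1))\quad\text{as }y\to x.$$
Fixing a small $r>0$ and applying this inequality to arbitrary $y,y'$ with $d_1(x,y)=d_1(x,y')=r$, the ratio $d_2(f(x),f(y))/d_2(f(x),f(y'))$ is at most $L^2(1+o(1))$, so $L_f(x,r)/l_f(x,r)\le L^2(1+o(1))$; letting $r\to 0$ gives $H(f,x)\le L^2$, and taking the supremum over $x$ (the isolated point $p$ contributing nothing beyond its punctured-neighborhood behavior) yields $H(f)\le L^2$.

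The main obstacle is ensuring that the $o(1)$ in Theorem~\ref{lud} is uniform in the direction of approach, so that the same error term controls both the supremum and infimum taken over the sphere $d_1(x,y)=r$. This uniformity is clear from the statement of Theorem~\ref{lud}: once $x$ and $p$ are fixed, the sandwich $\log(1+2d_1(x,y)/(d_1(x,p)+d_1(x,y)))\le \tau_p(x,y)\le \log(1+2d_1(x,y)/(d_1(x,p)-d_1(x,y)))$ depends on $y$ only through the scalar $d_1(x,y)=r$, and the analogous statement in the target applies once $f(x)$ and $f(p)$ are fixed. Hence the error terms depend on $r$ alone, and the pointwise argument extends uniformly across the sphere, making the division in the previous paragraph legitimate in the limit $r\to 0$.
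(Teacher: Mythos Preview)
Your argument is correct and follows the same strategy as the paper: both proofs linearize $\tau_p$ near a fixed point via the density $2/d(\cdot,p)$ (the paper through explicit two-sided inequalities, you via Theorem~\ref{lud}) and then pass the $\tau$-bilipschitz hypothesis through to obtain the $L^2$ bound on the metric distortion ratio. One small correction to your uniformity discussion: the target-side sandwich from Theorem~\ref{lud} depends on $f(y)$ only through $d_2(f(x),f(y))$, not through $r$, so the claim that ``the error terms depend on $r$ alone'' is not literally true in the image; the uniformity you need comes instead from the continuity of $f$ at $x$ (which you already invoked), since $\sup_{d_1(x,y)=r}d_2(f(x),f(y))\to 0$ as $r\to 0$.
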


\begin{proof}
Let $p\in X$. Fix $z\in X\setminus\{p\}$, $t\in (0,1/2)$ and $x,y\in X\setminus\{p\}$ with
$$ d_1(x,z)=d_1(y,z)=td(z,p).
$$
Observe that 
$$ \frac{d_2(f(x),f(y))\cdot(d_2(f(x),f(p))\wedge d_2(f(y),f(p)))}{\sqrt{d_2(f(x),f(p)) d_2(f(y),f(p))}}  \le d_2(f(x),f(y))
$$
$$ \le \frac{d_2(f(x),f(y))\cdot(d_2(f(x),f(p))\wedge d_2(f(y),f(p)))}{\sqrt{d_2(f(x),f(p)) d_2(f(y),f(p))}}  \sqrt{1+\frac{d_2(f(x),f(y))}{d_2(f(x),f(p))\wedge d_2(f(y),f(p))}},
$$
where the second inequality follows from the triangle inequality.
For this choice of points $x$ and $y$, it is easy to see that
$$\tau_p(x,y)\le \log \left(\frac{1+3t}{1-t}\right),\\
\tau_p(x,y)\ge \log \left(\frac{1-3t}{1+t}\right). 
$$
Now, 
\begin{eqnarray*}
\frac{d_2(f(x),f(z))}{d_2(f(y),f(z))} &\le & 
\ds\frac{\frac{d_2(f(x),f(z)).(d_2(f(x),f(p))\wedge d_2(f(z),f(p)))}{\sqrt{d_2(f(x),f(p)) d_2(f(z),f(p))}}  \sqrt{1+\frac{d_2(f(x),f(z))}{d_2(f(x),f(p))\wedge d_2(f(z),f(p))}}}{\frac{d_2(f(y),f(z))\cdot(d_2(f(y),f(p))\wedge d_2(f(z),f(p)))}{\sqrt{d_2(f(y),f(p)) d_2(f(z),f(p))}}}\\
&\le & \frac{e^{\tau_{f(p)}(f(x),f(z))}-1}{e^{\tau_{f(p)}(f(y),f(z))}-1} \\ 
&& \cdot \frac{(d_2(f(x),f(p))\wedge d_2(f(z),f(p)))\cdot \sqrt{1+\frac{d_2(f(x),f(z))}{d_2(f(x),f(p))\wedge d_2(f(z),f(p))}}}{(d_2(f(y),f(p))\wedge d_2(f(z),f(p)))}\\
&\le & \frac{e^{L \tau_{p}(x,z)}-1}{e^{\tau_{p}(y,z)/L}-1}  \\
&& \cdot\frac{(d_2(f(x),f(p))\wedge d_2(f(z),f(p)))\cdot \sqrt{1+\frac{d_2(f(x),f(z))}{d_2(f(x),f(p))\wedge d_2(f(z),f(p))}}}{(d_2(f(y),f(p))\wedge d_2(f(z),f(p)))}\\
&\to & L^2,
\end{eqnarray*}
when $t\to 0$. The proof is complete.
\end{proof}
\section{Convexity properties of metric balls}
In this section we establish the convexity properties of the $\tau_p$ metric balls in Euclidean punctured spaces. 
For the proof we follow a technique similar to the one used in \cite{HKLV16}.

Suppose that $r=r(\theta)$ is a function given in polar coordinates. Then
the slope of the curve $r=r(\theta)$ at the point $(r, \theta)$ is
$$
m(\theta)=\frac{r(\theta)+\tan (\theta) \ds\frac{d r}{d \theta}}{-r(\theta) \tan (\theta)+\ds\frac{d r}{d \theta}}.
$$

\begin{theorem}\label{non-conv}
Let $D=\Rn\setminus\{z\},z\in \Rn, x\in D$ and $r\in (0,1]$. Then the metric ball $B_{\tau_p}(x,r)$ is not convex for all $r> \log 3$. 
\end{theorem}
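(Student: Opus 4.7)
The plan is to exhibit, for every $r>\log 3$, two points on the boundary of $B_{\tau_p}(x,r)$ whose midpoint lies strictly outside the ball, then invoke continuity to promote this to two genuine interior points with exterior midpoint, contradicting convexity. By translation I assume $p=z=0$, and by rotation of $\mathbb R^n$ I assume $x=ae_1$ with $a=|x-p|>0$. Since $\tau_p$ depends only on the mutual Euclidean distances $|x-y|,|x|,|y|$, the ball is a solid of revolution about the axis through $0$ and $x$, so it suffices to test convexity in the two-plane $\Pi=\mathrm{span}(e_1,e_2)$.

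Set $c:=(e^r-1)/2$, so that $r>\log 3$ is exactly $c>1$. The equation $\tau_p(x,y)=r$ becomes $|x-y|^2=c^2|x|\,|y|$, which in polar coordinates $y=(\rho\cos\theta,\rho\sin\theta)$ about the puncture reads
$$\rho^2-a(2\cos\theta+c^2)\rho+a^2=0.$$
For $\cos\theta>(2-c^2)/2$ this has two positive roots $\rho_-(\theta)\le\rho_+(\theta)$ with $\rho_-\rho_+=a^2$, and the ball in $\Pi$ is exactly $\{\rho_-(\theta)<\rho<\rho_+(\theta)\}$; the inner arc $\rho=\rho_-(\theta)$ is the one facing the puncture.

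The crucial step is a second-order local analysis of the $x$-coordinate of the inner arc, $g(\theta):=\rho_-(\theta)\cos\theta$. For each small $\theta>0$, the symmetric boundary pair $y_\pm(\theta)=(g(\theta),\pm\rho_-(\theta)\sin\theta)$ has midpoint $M(\theta)=(g(\theta),0)$ on the positive $x$-axis. The symmetry $\theta\mapsto-\theta$ of the defining quadratic forces $\rho_-$ to be even in $\theta$, so $\rho_-'(0)=0$; a second implicit differentiation at $\theta=0$ then gives
$$\rho_-''(0)=\frac{2\rho_-(0)}{c\sqrt{c^2+4}},\qquad g''(0)=\rho_-''(0)-\rho_-(0)=\rho_-(0)\bigg(\frac{2}{c\sqrt{c^2+4}}-1\bigg).$$
The inequality $c\sqrt{c^2+4}>2$ is equivalent to $c^4+4c^2>4$, which already holds at $c=1$ and is monotone increasing in $c$; so $g''(0)<0$ for every $c>1$, and $g$ attains a strict local maximum at $\theta=0$. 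Consequently $0<g(\theta)<g(0)=\rho_-(0)$ for all sufficiently small $\theta>0$.

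This places $M(\theta)$ on the positive $x$-axis strictly between the puncture $0$ and the point $P_0=(\rho_-(0),0)$, while the ball meets the positive $x$-axis in precisely the open interval $(\rho_-(0),\rho_+(0))$. Hence $M(\theta)$ lies outside the closed ball. Perturbing $y_\pm(\theta)$ slightly radially away from $z$ produces points in the open ball, and by continuity of $\tau_p(x,\cdot)$ the perturbed midpoint remains outside $B_{\tau_p}(x,r)$, so the ball is not convex. I expect the main technical obstacle to be the implicit differentiation yielding $\rho_-''(0)$ and the monotone sign check $g''(0)<0$ for all $c>1$; once these are in hand, the geometric conclusion reduces to a one-line midpoint observation on the $x$-axis.
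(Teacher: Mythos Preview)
Your proof is correct and follows essentially the same route as the paper: reduce to $\R^2\setminus\{0\}$ with center $x=e_1$, write the boundary in polar coordinates about the puncture via $\rho^2-(2\cos\theta+c^2)\rho+1=0$, and perform a second-order local analysis of the inner arc $\rho_-(\theta)$ at $\theta=0$. The paper packages this computation as the sign of the Cartesian tangent slope $m_1(\theta)$ for small $\theta>0$ (the denominator of $m_1$ is a positive multiple of your $g'(\theta)$) and then simply asserts non-convexity, whereas you compute $g''(0)$ directly and supply the explicit midpoint-on-the-axis argument together with the perturbation to interior points, so your write-up is the more complete of the two; note also that your inequality $c\sqrt{c^2+4}>2$ already holds for $c$ slightly below $1$, so the method in fact yields non-convexity for all $r>\log\bigl(1+2\sqrt{2\sqrt{2}-2}\,\bigr)\approx 1.037<\log 3$.
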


\begin{proof}
By symmetric property, without loss of generality we consider the balls in the punctured space $\R^2\setminus\{0\}$ with $x=1$. Let $(0,1]$ and $y\in \partial B_{\tau_p}(1,r)$. Then by definition we have
$$ \tau_0(1,y)=\log \left(1+2\frac{|1-y|}{\sqrt{|y|}}\right)=r
$$
implies
$$ |1-y|= \frac{e^r-1}{2} \sqrt{|y|}.
$$
Again from the law of cosines we have $|1-y|^2=|y|^2+1-2|y|\cos \theta$, where $\theta$ is the angle between $y,0$ and $x$. Hence, by denoting $t=|y|$, we get
\begin{equation}\label{non-conv-tp}
t^2+1-2t\cos \theta= \frac{(e^r-1)^2}{4}t.
\end{equation}
Solving \eqref{non-conv-tp} for $t$ we obtain
\begin{equation}\label{sol-ball-conv}
t(\theta)=\frac{1}{2}\left[2\cos \theta + \frac{(e^r-1)^2}{4} \pm \sqrt{\left(2\cos \theta+\frac{(e^r-1)^2}{4}\right)^2-4}\right].
\end{equation}
For the proof we consider the following value of $t(\theta)$
$$ t_1(\theta)=\frac{1}{2}\left[2\cos \theta + \frac{(e^r-1)^2}{4} - \sqrt{\left(2\cos \theta+\frac{(e^r-1)^2}{4}\right)^2-4}\right].
$$
We are interested in those values of $\theta$ for which
$$ \left(2\cos \theta+\frac{(e^r-1)^2}{4}\right)^2-4>0
$$ 
or,
$$ \cos \theta > \left(1-\frac{(e^r-1)^2}{8}\right), \mbox{ or }, \theta <
\arccos \left(1-\frac{(e^r-1)^2}{8}\right).
$$
Clearly, for $r>\log 3$, $\arccos \left(1-(e^r-1)^2/8\right)>\pi/3$. 
Therefore, it is enough to consider the angles $\theta \in (0,\pi/3)$.
Consider the slope of the tangent to the boundary $\partial B_{\tau_p}(1,r)$ of the metric ball near $\theta=0$. The formula for the slope of the tangent to the boundary is
\begin{eqnarray*}
m_1(\theta) &=& \frac{t_1(\theta)+\tan \theta t_1'(\theta)}{-t_1(\theta)\tan \theta+t_1'(\theta)}\\
&=& \frac{2\sin \theta \tan \theta +\sqrt{\left(2\cos \theta+\frac{(e^r-1)^2}{4}\right)^2-4}}{2\sin \theta -\tan \theta \sqrt{\left(2\cos \theta+\frac{(e^r-1)^2}{4}\right)^2-4}},
\end{eqnarray*}
 where 
$$ t_1'(\theta)=-\sin \theta \left[1-\frac{2\cos \theta +\frac{(e^r-1)^2}{4}}{\sqrt{\left(2\cos \theta +\frac{(e^r-1)^2}{4}\right)^2-4}}\right].
$$
Clearly, when $\theta\in (0,\pi/3)$ and $r> \log 3$,
$$ 2\sin \theta \tan \theta +\sqrt{\left(2\cos \theta+\frac{(e^r-1)^2}{4}\right)^2-4}>0.
$$
We only need to show that 
$$ g(\theta):=2\sin \theta -\tan \theta \sqrt{\left(2\cos \theta+\frac{(e^r-1)^2}{4}\right)^2-4}<0
$$
near $\theta=0$. Simple computations show that $g(0)=0$ and 
$$g'(0)=2+\sqrt{\left(2+\frac{(e^r-1)^2}{4}\right)^2-4}>0
$$ if
$r> \log 3$. Hence for sufficiently small $\theta$, $g(\theta)>0$.
This shows that the slope of the tangent is negative near $\theta=0$. The proof is complete.
\end{proof}

The following lemma is useful in this setting.
\begin{lemma}\label{lem-conv}
For $0<r\le \log 3$ and $\theta\in (0,\arccos(1-(e^r-1)^2/8))$,
$$4 \sin^2 \theta \sqrt{\alpha(\theta)}+\sin \theta\alpha'(\theta)-2\cos \theta \alpha(\theta)+ (\alpha(\theta))^{3/2}\le 0,
$$
where $\alpha(\theta)=\left(2\cos \theta+\frac{(e^r-1)^2}{4}\right)^2-4$.
\end{lemma}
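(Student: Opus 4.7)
The plan is to reduce the stated inequality to a single polynomial inequality in the boundary-parameter $t_1$ and then verify that inequality under the hypotheses. Set $c:=(e^r-1)^2/4\in(0,1]$, $u(\theta):=2\cos\theta+c$, and $\beta(\theta):=\sqrt{\alpha(\theta)}$, so that $\alpha=u^2-4$ and $\alpha'(\theta)=-4u\sin\theta$; the condition $\theta\in(0,\arccos(1-c/2))$ is exactly $u>2$, hence $\beta>0$. A direct substitution rewrites the left-hand side as
\[
F(\theta)=4\sin^2\theta\,(\beta-u)+\beta^2(\beta-2\cos\theta),
\]
and the target is $F(\theta)\le 0$.

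I would then exploit the fact that $t_1(\theta):=(u-\beta)/2$ and $t_2(\theta):=(u+\beta)/2$ are the two roots of $t^2-ut+1=0$, which are precisely the radii at which the ray from the puncture meets the ball boundary in Theorem~\ref{non-conv}. Vieta's formulas give $t_1 t_2=1$ and $t_1+t_2=u$, so $\beta=t_2-t_1=t_1^{-1}-t_1$, and one checks $\beta-u=-2t_1$ and $\beta-2\cos\theta=c-2t_1$. Combined with the one-line identity $4\sin^2\theta+\beta^2=c(4\cos\theta+c)$, these collapse $F$ to
\[
F(\theta)=c\bigl[\beta^2-2t_1(4\cos\theta+c)\bigr].
\]
Since $c>0$, the problem reduces to $\beta^2\le 2t_1(4\cos\theta+c)$. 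Substituting $\beta^2=t_1^{-2}+t_1^2-2$ and $4\cos\theta+c=2(t_1+t_1^{-1})-c$ and clearing denominators by $t_1^2>0$ yields the equivalent quartic inequality
\[
\phi(t_1):=3t_1^4-2c\,t_1^3+6t_1^2-1\ \ge\ 0.
\]

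The final step is to verify $\phi(t_1(\theta))\ge 0$ on the whole interval. Since $\phi'(t)=6t(2t^2-ct+2)$ and the quadratic $2t^2-ct+2$ has negative discriminant $c^2-16$ for $c\le 1$, the function $\phi$ is strictly increasing on $(0,\infty)$. Moreover the derivative $t_1'=\sin\theta\,(u-\beta)/\beta\ge 0$ shows that $t_1$ is itself increasing on $[0,\arccos(1-c/2))$, with minimum value $t_1(0)=\bigl(2+c-\sqrt{c^2+4c}\bigr)/2$. Hence it is enough to check $\phi(t_1(0))\ge 0$; using the quadratic relation $t_1(0)^2=(2+c)\,t_1(0)-1$ to reduce higher powers makes this linear in $t_1(0)$, and after inserting the closed form of $t_1(0)$ one is left with a single irrational inequality in $c$. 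The main obstacle is this final verification: one isolates the radical $\sqrt{c^2+4c}$ and squares to obtain a polynomial inequality in $c$ on $(0,1]$, which is then handled by elementary factoring or monotonicity, and this is precisely the step where the threshold $r\le\log 3$ is invoked.
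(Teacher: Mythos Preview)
The paper states this lemma without proof, so there is no argument to compare your approach against. Your chain of reductions is correct and quite clean: with $c=(e^r-1)^2/4$, $u=2\cos\theta+c$, $\beta=\sqrt{\alpha}$ and $t_1=(u-\beta)/2$, one indeed obtains
\[
F(\theta)=c\bigl[\beta^2-2t_1(4\cos\theta+c)\bigr],
\]
and this is $\le 0$ if and only if $\phi(t_1):=3t_1^4-2ct_1^3+6t_1^2-1\ge 0$. Since both $\phi$ and $\theta\mapsto t_1(\theta)$ are increasing on the relevant ranges, everything really does reduce to the single check $\phi\bigl(t_1(0)\bigr)\ge 0$.

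The gap is in that final check: the inequality $\phi\bigl(t_1(0)\bigr)\ge 0$ is \emph{false} for $c$ near~$1$, so the ``polynomial inequality in $c$ on $(0,1]$'' you plan to verify cannot hold. To see this directly from the lemma, let $\theta\to 0^+$: the left-hand side tends to $\alpha(0)\bigl(\sqrt{\alpha(0)}-2\bigr)$ with $\alpha(0)=c^2+4c$, and this is $\le 0$ exactly when $c^2+4c\le 4$, i.e.\ $c\le 2\sqrt{2}-2\approx 0.828$. At $c=1$ (that is, $r=\log 3$) the limit is $5(\sqrt{5}-2)>0$, and by continuity the inequality in the lemma fails for all sufficiently small $\theta>0$. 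Equivalently, at $c=1$ your own reduction with $s=t_1(0)=(3-\sqrt{5})/2$ and $s^2=3s-1$ gives
\[
\phi\bigl(t_1(0)\bigr)=65\,t_1(0)-25=\frac{145-65\sqrt{5}}{2}<0.
\]
So the obstacle is not your method but the statement itself: your argument actually shows that the lemma holds if and only if $c\le 2\sqrt{2}-2$, i.e.\ $r\le\log\bigl(1+2\sqrt{2\sqrt{2}-2}\,\bigr)\approx 1.037$, which is strictly smaller than the stated threshold $\log 3\approx 1.099$. The paper's subsequent convexity proof, which invokes the lemma verbatim, inherits the same issue.
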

\begin{theorem} \label{main10}
The $\tau_p$-metric ball $B_{\tau_p}(x,r)$ is convex for $r\le \log 3$.  
\end{theorem}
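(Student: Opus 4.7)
My plan is to reduce the theorem to a polar-coordinate computation and then invoke Lemma~\ref{lem-conv}. By the similarity invariance of $\tau_p$ (translations, rotations, and positive scalings of $\Rn$ carry $\tau_p$-balls to $\tau_{p'}$-balls), and by the fact that a body of revolution in $\Rn$ is convex if and only if its meridian section is, I first reduce to the planar case $D = \Rt \setminus \{0\}$ with $x = 1$. As in the proof of Theorem~\ref{non-conv}, the boundary $\partial B_{\tau_p}(1,r)$ is then described in polar coordinates by the two branches
$$
t_{1,2}(\theta) = \tfrac{1}{2}\Bigl[(2\cos\theta + c) \mp \sqrt{\alpha(\theta)}\Bigr], \qquad |\theta| \le \theta_0 := \arccos\bigl(1 - c/2\bigr),
$$
where $c = (e^r-1)^2/4$ and $\alpha(\theta) = (2\cos\theta + c)^2 - 4$, with $t_1 t_2 = 1$ and $\theta_0 \le \pi/3$ whenever $r \le \log 3$.

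Next I would invoke the standard convexity criterion for polar curves: for a curve $r = r(\theta)$ with $r > 0$, the region on the origin side is (locally) convex precisely when $\Phi[r] := r^2 + 2(r')^2 - r r'' \ge 0$, while the region on the opposite side is convex precisely when $\Phi[r] \le 0$. Since the ball $B_{\tau_p}(1,r)$ lies on the origin side of the outer branch $t_2$ and on the far side of the inner branch $t_1$, convexity of the ball is equivalent to
$$
\Phi[t_2] \ge 0 \quad \text{and} \quad \Phi[t_1] \le 0 \quad \text{on } (-\theta_0, \theta_0),
$$
and by the symmetry $\theta \mapsto -\theta$ it suffices to verify these on $(0, \theta_0)$.

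The key step is to evaluate $\Phi[t_{1,2}]$ by implicit differentiation of $t^2 - (2\cos\theta + c) t + 1 = 0$. The first derivative yields $t_{1,2}' = \pm 2 t_{1,2}\sin\theta/\sqrt{\alpha}$ (sign $+$ on $t_1$, sign $-$ on $t_2$), and a second implicit differentiation, together with $\alpha' = -4\sin\theta(2\cos\theta + c)$, reduces after routine algebra to the closed-form identities
$$
\frac{\alpha^{3/2}}{t_2^{\,2}}\,\Phi[t_2] = \alpha^{3/2} - \sin\theta\,\alpha' + 2\cos\theta\,\alpha + 4\sin^2\theta \sqrt{\alpha},
$$
$$
\frac{\alpha^{3/2}}{t_1^{\,2}}\,\Phi[t_1] = \alpha^{3/2} + \sin\theta\,\alpha' - 2\cos\theta\,\alpha + 4\sin^2\theta \sqrt{\alpha}.
$$
For $\theta \in (0, \theta_0) \subset (0, \pi/3)$ each summand in the identity for $t_2$ is nonnegative (using $\alpha > 0$, $\cos\theta > 0$, and $\alpha' < 0$), so $\Phi[t_2] \ge 0$ is immediate. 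The identity for $t_1$ produces precisely the expression whose nonpositivity is asserted by Lemma~\ref{lem-conv}, giving $\Phi[t_1] \le 0$.

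Finally, the two branches meet smoothly at $\theta = \pm \theta_0$, where $\alpha = 0$ and $t_1 = t_2 = 1$, so the pointwise local convexity assembles into global convexity of $B_{\tau_p}(1, r) \subset \Rt$, and hence of the original ball in $\Rn$ by rotational symmetry. The main obstacle, aside from the analytic content of Lemma~\ref{lem-conv} itself, is keeping the sign convention straight: because the puncture lies outside the ball, the curvature inequality for the inner branch is reversed, and it is exactly this reversal that fails when $r > \log 3$, matching the non-convexity conclusion of Theorem~\ref{non-conv}.
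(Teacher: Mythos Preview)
Your proposal is correct and follows essentially the same route as the paper: reduce to the planar punctured space, parametrize $\partial B_{\tau_p}(1,r)$ in polar coordinates by the two branches $t_1,t_2$, verify the outer branch directly from $\alpha'<0$, and invoke Lemma~\ref{lem-conv} for the inner branch. The only cosmetic difference is that the paper phrases the convexity test as monotonicity of the tangent slope ($m_1'\le 0$, $m_2'\ge 0$) while you use the polar curvature quantity $\Phi[r]=r^2+2(r')^2-rr''$; since the numerator of $m_i'$ equals $\sec^2\theta\cdot(\alpha^{3/2}/t_i^{\,2})\,\Phi[t_i]$ over a positive denominator, the two criteria are literally the same inequality, and your closed-form identities match the paper's expressions term for term. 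Your write-up is in fact slightly more complete, since you spell out the reduction from $\Rn$ to $\Rt$ via rotational symmetry and the smooth join of the two branches at $\theta=\pm\theta_0$, neither of which the paper addresses explicitly.
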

\begin{proof}
We consider the case when $n=2$. Following \eqref{sol-ball-conv} we denote
$$ t_1(\theta)=\frac{1}{2}\left[2\cos \theta + \frac{(e^r-1)^2}{4} - \sqrt{\left(2\cos \theta+\frac{(e^r-1)^2}{4}\right)^2-4}\right],
$$
and
$$ t_2(\theta)=\frac{1}{2}\left[2\cos \theta + \frac{(e^r-1)^2}{4} + \sqrt{\left(2\cos \theta+\frac{(e^r-1)^2}{4}\right)^2-4}\right].
$$

As it is shown in Theorem~\ref{non-conv} the slope of the tangent to the boundary of the ball $\partial B_{\tau_p}(x,r)$ is
$$m_1(\theta) = \frac{2\sin \theta \tan \theta +\sqrt{\left(2\cos \theta+\frac{(e^r-1)^2}{4}\right)^2-4}}{2\sin \theta -\tan \theta \sqrt{\left(2\cos \theta+\frac{(e^r-1)^2}{4}\right)^2-4}},
$$
and
$$m_2(\theta) = \frac{2\sin \theta \tan \theta -\sqrt{\left(2\cos \theta+\frac{(e^r-1)^2}{4}\right)^2-4}}{2\sin \theta +\tan \theta \sqrt{\left(2\cos \theta+\frac{(e^r-1)^2}{4}\right)^2-4}}.
$$
It follows that
$$ m_1'(\theta)=\frac{\sec^2\Big[ \theta \Big(4 \sin^2\big(\theta \sqrt{\alpha(\theta)}\big)+\sin\big( \theta\alpha'(\theta)\big)-2\cos \big(\theta \alpha(\theta)\big)+ \big(\alpha(\theta)\big)^{3/2}\Big)\Big]}
{\sqrt{\alpha(\theta)}\big[2\sin \theta-\tan\big( \theta \sqrt{\alpha(\theta)}\big)\big]^2},
$$
where 
$$ \alpha(\theta)=\left(2\cos \theta+\frac{(e^r-1)^2}{4}\right)^2-4.
$$

Our aim is to show that for all $r\in (0,\log 3]$ and for all 
$$
0<\theta < \arccos(1-(e^r-1)^2/8), 
$$
we have $m_1'(\theta)\le 0$ and $m_2'(\theta)\ge 0$. Note that $ \alpha(\theta)>0$ if and only if
\begin{equation}
\label{eq-conv}
 \theta\in \big(0, \arccos(1-(e^r-1)^2/8)\big).
\end{equation}

It is clear that for the specified range of $r$ and $\theta$, 
$$\sqrt{\alpha(\theta)}(\sin \theta-\tan \theta \sqrt{\alpha(\theta)})^2>0.
$$
Lemma~\ref{lem-conv} guarantees that
$$2\sec^2\Big[ \theta \Big(4 \sin^2\big( \theta \sqrt{\alpha(\theta)}\big)+\sin \big( \theta\alpha'(\theta))\big)-2\cos\big( \theta \alpha(\theta)\big)+ \big(\alpha(\theta)\big)^{3/2}\Big)\Big]\le 0
$$
Now,
$$
m_2'(\theta)=\frac{\sec^2 \Big[\theta \Big(4 \sin^2\big( \theta \sqrt{\alpha(\theta)}\big)-\sin\big( \theta\alpha'(\theta)\big)+2\cos\big( \theta \alpha(\theta)\big)+ \big(\alpha(\theta)\big)^{3/2}\Big)\Big]}
{\sqrt{\alpha(\theta)}\big[2\sin \theta+\tan\big( \theta \sqrt{\alpha(\theta)}\big)\big]^2}.
$$
Moreover,  for the specified range $r\in (0,\log 3]$ and $\theta\in (0,\arccos(1-(e^r-1)^2/8))$ we have
$$ 4 \sin^2 \theta \sqrt{\alpha(\theta)}-\sin \theta\alpha'(\theta)+2\cos \theta \alpha(\theta)+ (\alpha(\theta))^{3/2}>0,
$$
because of \eqref{eq-conv} and the fact that 
$$ \alpha'(\theta)=-4\sin \theta \left(2\cos \theta+\frac{(e^r-1)^2}{4}\right).
$$
Hence $m_2'(\theta)\ge 0$. 
\begin{figure}[h!]
\includegraphics{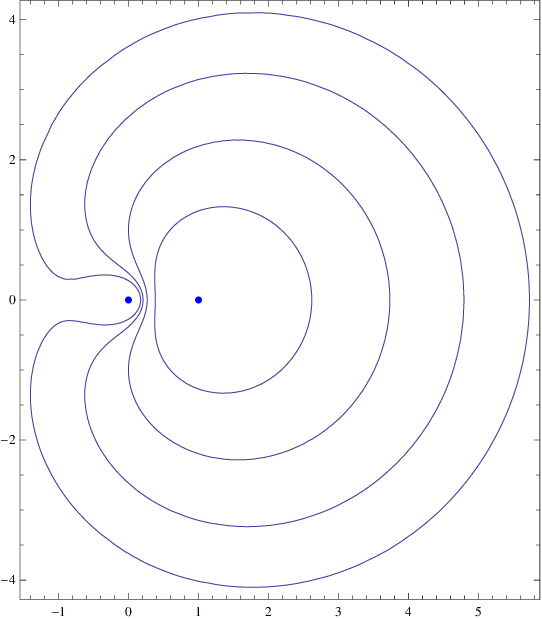}
\caption{The $\tau_p$-metric balls in $\R^2\setminus \{0\}$ centred at $e_1=(1,0)$ with radii $r=\log 3, \log 5,\log 7$, and $\log 8.8$.}
\end{figure}
The proof is complete.
\end{proof}

\subsection*{Acknowledgements}  The research was partially supported by the Natural Science Foundation of Guangdong Province (Grant no. 2024A1515010467) and the Li~Ka~Shing Foundation GTIIT-STU Joint Research Grant (Grant no. 2024LKSFG06).

\end{document}